\newcommand{\mb}[1]{\ensuremath{\mathbb{#1}}}
\newcommand{\R}{{\mb{R}}}
\DeclareMathSymbol{\intopmod}{\mathop}{symbols}{115}
\newtheorem{thm}{Theorem}
\newtheorem{lem}[thm]{Lemma}
\newtheorem{prop}[thm]{Proposition}
\theoremstyle{remark}
\newtheorem{rem}{Remark}
\newtheorem{defn}{Definition}
\newtheorem{acknowledgement}{Acknowledgement}
\newcommand{\EQ}[1]{\begin{equation} \begin{split} #1 \end{split} \end{equation}}
\newcommand{\na}{\nabla}
\newcommand{\De}{\Delta}
\newcommand{\Om}{\Omega}
\newcommand{\I}{\infty}
\renewcommand{\bar}{\overline}
\renewcommand{\hat}{\widehat}
\numberwithin{equation}{section}
\numberwithin{equation}{section}
\def\keywords{
    \vspace{-0.5ex}
    \noindent
    \if@twocolumn
      \small{\bf  Keywords}\/---$\!$    \else
      \begin{center}\small\ {\bf Keywords}\end{center}\quotation\small
    \fi}
\def\endkeywords{\vspace{-0.2em}\par\if@twocolumn\else\endquotation\fi
    \normalsize\rm}
\begin{document}

\title{\vspace{-1cm}Energy decay  for linear dissipative wave equations\\  in exterior domains}

\author{Lassaad Aloui \footnote{ email: lassaad.aloui@fsg.rnu.tn}\\ {\small \it Department of Mathematics, Faculty of Sciences, }\\ {\small \it Northern Borders University, Arar, P.O. Box 1321, Kingdom of Saudi Arabia.}\\Slim Ibrahim\footnote{email: ibrahim@math.uvic.ca http://www.math.uvic.ca/~ibrahim/} 
\\ { \small \it Department of Mathematics and Statistics, University of Victoria}\\
 {\small \it PO Box 3060 STN CSC, Victoria, BC, V8P 5C3, Canada.} \\ Moez Khenissi\footnote{ email: moez.khenissi@fsg.rnu.tn}\\ {\small\it D\'epartement de Math\'ematiques, ESST de Hammam Sousse}\\
{\small \it Rue Lamine El Abbessi, 4011 Hammam Sousse,Universit\'e de Sousse, Tunisie.}
  }

\maketitle
\vspace{-0.7cm}
\begin{abstract}

In earlier works \cite{Al-Kh,Kh}, we have shown the uniform decay of the local energy of the damped wave equation in exterior domain when the damper is  spatially localized near captive rays. In order to have uniform decay of the total energy, the damper has also to act at space infinity. In this work, 
we establish uniform decay of both the local and global energies. The rates of decay turns out to be the same as those for the heat equation, which shows that an effective damper at space infinity strengthens the parabolic structure in the equation.
\end{abstract}
\vspace{-0.3cm}
\begin{keywords}
  \noindent Damped wave equation, Energy decay, Local energy, Resolvent estimates.
\end{keywords}
\tableofcontents
\section{Introduction}   \label{S-1}

Let $\Om=\R^N\setminus\mathcal O$ be the exterior of a smooth compact obstacle $\mathcal O\subset \R^N$ with $N\geq 2$. In the sequel, fix a constant  $r_0>0$  such that $\mathcal{O}\subset B_{r_0}=\{x\in \R^N;|x|<r_0\}$, and consider the damped linear wave equation in $\Om$,

\EQ{
 u_{tt} + au_t -\De u  = 0,\label{DLW}}
where the solution $u(t,x):[0,\I)\times\Om\to\R$, and the damper $a\geq0$. Supplementing  \eqref{DLW} with Dirichlet type boundary conditions $u_{|\partial\Om\times(0,\infty)}=0$, one can easily see that the energy given by 
\EQ{
 E(u;t) = \frac{1}{2}\int_\Om (|\partial_t u(x,t)|^2 + |\na u(x,t)|^2)\; dx
 }
 is non-increasing
$$
\frac{d}{dt}E(u;t)= -\int_\Om a(x)|\partial_t u(x,t)|^2\;dx.
$$
Energy rate of decay is not in general uniform. Moreover, one can also define the local energy given for any $r>0$ by
\EQ{
 E_r(u;t) = \frac{1}{2}\int_{\Om\cap B_r } (|\partial_t u(x,t)|^2 + |\na u(x,t)|^2)\; dx.
 }
 In this paper we study the uniform rates of decay of the local and total energies of solutions of \eqref{DLW}. Assuming in addition that the damper acts in space infinity and satisfies the well known geometric control condition abbreviated as {\bf GCC} (see Definition \ref{GCC}), then we have the uniform decay of the local energy $E_r$. More precisely, we show the following two results.
\begin{thm}
\label{main thm 1}
Let $r_0$ be as above. Suppose that the damper $a(x)\in L^\infty (\Om)$ is smooth, satisfies the {\bf GCC} and  such that $a(x)=1$ for $|x|>r_0$. Let $r_1>0$, then there exists $c=c(r_0,r_1,\|a\|_{L^\infty})>0$ such that
any solution  $u$ of (\ref{DLW}) with initial data $(u_0,u_1)\in {\mathcal H}_{r_1}:=H^D_{r_1}\times L^2_{r_1}$ and Dirichlet boundary condition satisfies 

\EQ{E_{r_1}(u,t)\leq c 
(1+t)^{-N} E_{r_1}(u,0).
\label{loc.energy}
}   
\end{thm}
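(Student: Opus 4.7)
The plan is to reformulate \eqref{DLW} as a first-order abstract evolution equation $U'=\mathcal{A}U$ with $U=(u,\pd_t u)^\top$ and $\mathcal{A}=\begin{pmatrix}0&I\\ \De & -a\end{pmatrix}$ on the energy space $\mathcal{H}=H^D(\Om)\times L^2(\Om)$, and then to represent the localized solution through a contour integral
\EQ{\chi U(t)=\frac{1}{2\pi i}\int_{\Ga} e^{t\la}\chi(\la-\mathcal{A})^{-1}U_0\,d\la,}
where $\chi$ is a smooth cutoff supported in a ball slightly larger than $B_{r_1}$, containing the support of the data, and $\Ga$ is a contour deformed into $\{\Re\la<0\}$. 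Since $a\geq 0$, $\mathcal{A}$ generates a contraction semigroup whose spectrum lies in $\{\Re\la\leq 0\}$, which makes the deformation legitimate. The rate of decay of $E_{r_1}$ is then dictated by the behavior of the cutoff resolvent $\chi(\la-\mathcal{A})^{-1}\chi$ along $\Ga$.

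On the high-frequency part of $\Ga$ (where $|\Im\la|\gg 1$), the hypothesis $a\equiv 1$ at infinity eliminates outgoing radiation, and the GCC rules out the remaining trapped rays inside $B_{r_0}$, so the standard propagation-of-singularities and semiclassical machinery yield a bound $\|\chi(\la-\mathcal{A})^{-1}\chi\|=O(|\la|^{-1})$ in a thin strip around $i\R$. Repeated integration by parts in $\la$ converts any $|\la|^{-k}$ decay of the resolvent into $t^{-k}$ decay in time, which gives more than enough control on that portion of the contour.

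The rate $(1+t)^{-N}$ must therefore come from the low-frequency part of the contour, and this is the heart of the argument. Writing $\la^2+a\la-\De=-\De+\la a+\la^2$, near $\la=0$ the dominant operator is the heat-type operator $-\De+\la a$, whose free resolvent on $\R^N$ admits a well-known low-frequency expansion in terms of the Newtonian potential, carrying dimension-dependent powers (and in even $N$ logarithms). I would build a parametrix for $\chi(\la-\mathcal{A})^{-1}\chi$ by gluing the free heat resolvent in the region $|x|>r_0$ (where $a=1$) to an interior correction on $\Om\cap B_{r_0}$ that accommodates the Dirichlet condition and the absence of damping, the latter being controlled by the GCC-based estimates of \cite{Al-Kh,Kh}. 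A Neumann-series argument uniform in $\la$ near $0$ should then produce an expansion whose leading non-smooth term is $O(|\la|^{N-1})$ (up to logarithms), and feeding this into the contour integral gives exactly the rate $(1+t)^{-N}$.

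The main obstacle is the uniform control of this low-frequency expansion. One has to rule out a resonance at $\la=0$ for the wave-like interior problem, where one only has the GCC and no damping, and to verify that the logarithmic corrections appearing in even dimensions do not degrade the resulting rate. This is precisely where the strengthening from $a\geq 0$ to $a=1$ at infinity is decisive: it replaces the wave-type dispersion by a heat-type dispersion at low frequencies and produces the parabolic rate $t^{-N}$ rather than the slower rates obtained in the purely hyperbolic setting.
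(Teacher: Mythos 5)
Your overall strategy---representing the localized solution by an inverse Laplace/contour integral of the cutoff resolvent, splitting into low and high frequencies, handling high frequencies by GCC-based resolvent bounds plus integration by parts, and extracting the rate from a low-frequency expansion obtained by comparing $R_a$ with the constant-damper resolvent---is the same skeleton as the paper's proof. However, there are two concrete gaps. First, you propose to deform the contour into $\{\mathop{\mathrm{Re}}\lambda<0\}$ and justify this by the spectrum lying in the closed left half-plane; that is not a valid justification, and in fact the deformation cannot be pushed past $\lambda=0$, because the cutoff resolvent has a branch point there (its continuation carries $\lambda^{(N-2)/2}$ and, in even dimensions, $\log\lambda$ singularities). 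The paper avoids this entirely: it keeps the contour on the vertical line $\mathop{\mathrm{Re}}\lambda=\alpha>0$, proves that the boundary values of the truncated resolvent lie in the Besov class $\mathcal C^{N/2}$ uniformly in $\alpha$ (Lemmas \ref{resolvent reg} and \ref{low.freq}, built on Vainberg's expansion and the identity \eqref{relation3}), applies Shibata's Fourier-transform lemma (Lemma \ref{lemSh}) to get time decay, and only then lets $\alpha\to0$. If you insist on a genuine contour deformation you must instead wrap the contour around the branch cut and estimate that contribution, which is a different (though classical) argument that you have not supplied.

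Second, your quantitative claim is off: the leading non-smooth term of the cutoff resolvent near $\lambda=0$ is $O(|\lambda|^{(N-2)/2})$ (times a logarithm for even $N$), not $O(|\lambda|^{N-1})$. This is what the substitution $\tau=\lambda(\lambda+1)\sim\lambda$ in the low-frequency expansion of $(\tau-\Delta)^{-1}$ produces. A singularity of order $|\lambda|^{(N-2)/2}$ yields decay $(1+t)^{-N/2}$ for the $\mathcal H$-norm of $\chi U(t)$, i.e.\ for the \emph{amplitude}; the stated rate $(1+t)^{-N}$ for $E_{r_1}(u,t)$ then follows only because the local energy is \emph{quadratic} in that norm. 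Your exponent $N-1$ would (if it were true) give $(1+t)^{-N}$ for the amplitude and $(1+t)^{-2N}$ for the energy, which is not what is being proved; as written, you appear to be conflating the decay of the solution with the decay of its energy. Fixing the exponent and inserting the quadratic step is essential to land on \eqref{loc.energy}.
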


The spaces $H^D$ and $H^D_r$ are defined in the next section. The second result concerns the decay of the total energy. We have
\begin{thm}\label{main thm 2}
Under the hypotheses of Theorem \ref{main thm 1}, for all $(u_0,u_1)\in{\mathcal H}_{r_1}$, 
 there exists $c>0$ such
that 

\EQ{\|u(t)\|_{L^2}^2\leq c(1+t)^{-N/2}\label{tot.L^2}} and
\EQ{E(u,t)\leq c(1+t)^{-\theta},\label{tot.energy}}
where \EQ{\theta=min\{1+\frac{N}{2},\frac{3N}{4}\},} and $u$ a solution of (\ref{DLW}) with Dirichlet boundary condition.
\end{thm}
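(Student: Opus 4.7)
The plan is to exploit the heat-like asymptotic structure that \eqref{DLW} inherits from the fact that $a\equiv 1$ at infinity, and to combine this with the local energy decay already proved in Theorem \ref{main thm 1}. The argument splits naturally into a proof of the $L^2$ bound \eqref{tot.L^2}, followed by a multiplier and cut-off argument for the energy bound \eqref{tot.energy}.

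For \eqref{tot.L^2}, observe that in the far-field the operator $\pd_t^2+a\pd_t-\De$ is a perturbation of the heat operator $\pd_t-\De$, since $\pd_t^2$ is lower-order in the diffusive regime. I would compare the damped wave solution with $e^{t\De}$ acting on an averaged initial datum, through resolvent estimates at low frequency on $\mathcal H_{r_1}$. Because the initial data are compactly supported and thus belong to $L^1$, the heat-kernel bound $\|e^{t\De}f\|_{L^2}^2\leq c(1+t)^{-N/2}\|f\|_{L^1}^2$ transfers to the wave setting and yields \eqref{tot.L^2}. Applying the same scheme to $v=u_t$ (which again solves \eqref{DLW} with modified data), and then iterating with a time-weighted argument that exploits the integrability of $t\mapsto\int_\Om a|u_t|^2\,dx$, produces a refined bound $\|u_t(t)\|_{L^2}^2\leq c(1+t)^{-1-N/2}$.

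For the energy bound, testing \eqref{DLW} against $u$ and integrating by parts using the Dirichlet condition produces
\begin{equation}
\int_\Om |\na u|^2\,dx \;=\; -\frac{d}{dt}\Big(\int_\Om uu_t\,dx + \tfrac12\int_\Om au^2\,dx\Big) + \int_\Om |u_t|^2\,dx.
\end{equation}
To convert this into pointwise-in-time decay of the gradient I would split $\|\na u\|_{L^2}^2=\int_{\Om\cap B_R}|\na u|^2\,dx+\int_{|x|>R}|\na u|^2\,dx$. The first piece is majorized by the local energy of Theorem \ref{main thm 1} and decays at rate $(1+t)^{-N}$; the second piece is controlled, through the multiplier identity and the full dissipation $a\equiv 1$ at infinity, by the $L^2$ norms of $u$ and $u_t$, both decaying at the parabolic rates established above. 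Balancing the two contributions yields $\theta=\min\{1+\tfrac{N}{2},\,\tfrac{3N}{4}\}$: for $N\geq 4$ the diffusion rate $1+N/2$ is the slower and hence binding one, whereas for small $N$ the interpolation between the fast local rate $t^{-N}$ and the slower global $L^2$ rate $t^{-N/2}$ becomes the bottleneck and returns the exponent $3N/4$.

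The main obstacle I anticipate is that $a$ vanishes on a bounded set containing the obstacle, so $\int_\Om a|u_t|^2\,dx$ does not directly control $\|u_t\|_{L^2}^2$. Closing the argument therefore requires invoking the \textbf{GCC} through Theorem \ref{main thm 1} to recover control of $u_t$ inside $B_{r_0}$, and then patching with the strong damping at infinity. The delicate bookkeeping that singles out the transition dimension $N=4$ and produces the exponent $3N/4$ for small $N$ is where the technical core of the proof will lie.
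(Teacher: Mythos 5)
Your high-level strategy is the right one --- and it is essentially the paper's: control the near region by the local energy decay of Theorem \ref{main thm 1}, exploit $a\equiv 1$ at infinity to put the far field into a heat-like (constant-damping) regime, and balance the two rates to get $\theta=\min\{1+\tfrac N2,\tfrac{3N}{4}\}$ (you even correctly identify $\tfrac{3N}{4}$ as the average of the local rate $N$ and the global $L^2$ rate $\tfrac N2$). But as written the argument has a genuine gap: you never produce a concrete object to which constant-damping estimates actually apply. The statement that ``the heat-kernel bound transfers to the wave setting'' is precisely the diffusion phenomenon, which in an exterior domain with variable damping is a theorem to be proved, not an automatic transfer; and your proposed ``patching'' of the region where $a$ vanishes with the strong damping at infinity is left entirely schematic. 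The paper's device is to set $w=\phi u$ with $\phi=1$ for $|x|\ge r_1$ and $\phi=0$ on $\Omega_{r_0}$: since $a\equiv 1$ on $\mathrm{supp}\,\phi$ and $w$ vanishes near the obstacle, $w$ extends by zero to a solution of the \emph{Cauchy problem} $w_{tt}+w_t-\Delta w=f$ in $\mathbb R^N$ with $f=-\nabla\phi\cdot\nabla u-\Delta\phi\,u$ supported in an annulus, so that $\|f(t)\|_{L^2}^2\le cE_{r_1}(u,t)\le c(1+t)^{-N}$ by Theorem \ref{main thm 1}. This single step removes both the obstacle and the degeneracy of $a$ and is what makes the Matsumura/Kawashima--Nakao--Ono machinery (Lemmas \ref{lem2} and \ref{lem3}) applicable; your proposal has no substitute for it.

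Two further points. First, the ``balancing'' that yields $\theta$ is not an interpolation you can wave at: the paper obtains it from the quantitative ODE-type lemma of Kawashima--Nakao--Ono (Lemma \ref{lem3}), applied with $a=N/2$ (the $L^2$ decay of $w$, itself obtained from Lemma \ref{lem2} with $p=q=1$ plus the convolution Lemma \ref{lem1}) and $b=N$ (the decay of $\|f\|_{L^2}^2$), giving $\min\{1+a,b,(a+b)/2\}=\min\{1+\tfrac N2,\tfrac{3N}{4}\}$; your multiplier identity tested against $u$ is only the starting point of such an argument and does not by itself yield pointwise-in-time decay. Second, the intermediate claim $\|u_t(t)\|_{L^2}^2\le c(1+t)^{-1-N/2}$ obtained ``by iterating with a time-weighted argument'' is unsupported and is not needed (nor is it established) in the paper; if it held unconditionally it would do more than the theorem claims for $N=2,3$, which should have been a warning sign. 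To repair the proposal, introduce the cutoff $w=\phi u$, verify the source bound from Theorem \ref{main thm 1}, and then quote (or reprove) the two Kawashima--Nakao--Ono lemmas.
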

\begin{rem} Note that when $N\geq 4$, we have $\theta=1+\frac{N}{2}$ and therefore, the decay in \eqref{tot.energy} is optimal for initial data in $L^1\cap H^1$. See \eqref{Lp-Lq type decay} for $m=1$.
\end{rem}
The main novelty in the above two results resides in the assumptions on the damper $a(x)$ as in Theorem \ref{main thm 2}, we basically require almost the strict minimum: the damper to be effective near space infinity and in the region where trapped rays may exist. In \cite{Doulati}, only the decay of the total energy was investigated, and Daoulati has proven that for initial data $(u_0,u_1)$ in the energy space, one has \eqref{L2-bound,energy decay}. This result is proven using an energy method combined with microlocal analysis techniques. Observe that (for $N\geq 1$) our decay rate is sharper than \eqref{L2-bound,energy decay}. However, our hypothesis on the initial data is stronger than in \cite{Doulati} as we take them with compact support.\\

In \cite{Shib-Dan} Dan-Shibata studied the local energy decay for the damped wave equation \eqref{DLW}. More precisely, they have proven estimate \eqref{loc.energy} when the damper is taken constant and initial data with compact support. They improved an earlier result of Shibata \cite{Shib} by relaxing the smoothness assumption on the initial data. The method used in \cite{Shib-Dan,Shib} is based on the study of the resolvent. Our hypothesis on the support of the damper clearly relaxes the one in \cite{Shib-Dan,Shib}, and in this regard, Theorem 1 can be seen as a complete generalization of Dan-Shibata's result.\\ 
The local energy decay for the damped wave equation has been  also studied by Aloui-Khenissi \cite{Al-Kh}. More precisely,  they take a damper with compact support and satisfying the GCC. That led to an exponential decay of the local energy when the space dimension is odd and a polynomial decay like \eqref{loc.energy} when it is even. This is of course a decay stronger than ours for odd dimensions.  The difference in the decay rate is mainly due to the behavior of the cutoff resolvent for the low frequencies. The results in this paper also highlight the fact that the effectiveness of the damper at space infinity enhances the parabolic regime in \eqref{DLW}: for the total energy, we recover the optimal rate of decay of the energy of solutions of the heat equations. For the local energy, although the initial data are compactly supported, the diffusion phenomena slows down the decay rate compared to the case when the damper is not effective at space infinity.\\

The study of the uniform energy decay of solutions of the damped wave equation has a very long history, especially in the linear case. Before going any further, let us mention the most important works.
First, notice that if the damper is effective everywhere in $\Om$ i.e. when $a(x)=constant>0$ for all $x\in\Omega$, one can easily derive the following (quite standard) energy decay estimate and an $L^2$-bound for solutions of \eqref{DLW}
\EQ{
 E(u;t)\leq C I_0^2(1+t)^{-1},\qquad  \|u(t)\|_{L^2_x}^2
 \le C I_0^2,\label{L2-bound,energy decay}}
where $I_0^2=\|u_0\|_{H^1}^2+\|u_1\|_{L^2}^2$, and $H^1$ is the standard inhomogeneous Sobolev space. It is important to mention that estimate (\ref{L2-bound,energy decay}) does not in general imply a
uniform decay of $E(u,t)$. However, when $\Om$ is bounded, and thanks to Poincar\'e inequality, one can control the $L^2$ norm by Dirichlet norm and therefore \eqref{L2-bound,energy decay} implies an exponential energy decay.\\
In the case of the free space $\R^N$ and $a\equiv 1$, Matsumura \cite{Mat} obtained a precise
$L^p-L^q$ type decay estimates for solutions of (\ref{DLW}) 
\EQ{
 E(u;t)\leq C(1+t)^{-1-N(1/m-1/2)},\qquad  \|u(t)\|_{L^2_x}^2
 \le C(1+t)^{-N(1/m-1/2)},\label{Lp-Lq type decay}}
 where $(u_0,u_1)\in (H^1\bigcap L^m)\times(L^2\bigcap L^m)$ and $ m\in [1,2]$. It is worth noticing that the decay rate given in \eqref{Lp-Lq type decay} is the same one for the heat equation. Indeed,  it is well known (see  e. g. \cite{Ponce}
, Proposition 3.5.7) that 

 \EQ{\|\nabla^j e^{t\Delta}v_0\|_{L^2_x}^2
 \le Ct^{-j-N(1/m-1/2)}   \|v_0\|_{L^m}^2,\qquad j=0,1.
 \label{nabla Lp-Lq type decay}}
The case $m=2$ in \eqref{Lp-Lq type decay} and \eqref{nabla Lp-Lq type decay} shows that we have only the boundedness of
$\|u(t)\|_{L^2}$ and $\|e^{t\Delta}v_0\|_{L^2}$. In \cite{Ikeh02}, Ikehata has discussed the diffusion phenomenon for the damped wave equation in exterior domains. More precisely he proved
\EQ{\int_0^{+\infty}\|u(t,.)-v(t,.)\|_{L^2}^2dt\leq
\frac{4}{3}E(u,0),\qquad (u_0,u_1)\in H^1\times L^2}
and for all $(u_0,u_1)\in H^2\times H^1$
\EQ{\|u(t,.)-e^{t\Delta} (u_0+u_1)   \|_{L^2}\leq
C\frac{(\|u_0\|_{H^2}+\|u_1\|_{H^1})}{t^{1/2}{\rm log}(1+t)},
\label{asymp}}
Thus we can say that the damped wave $u(t,x)$ is asymptotically
equal to some solution $v(t,x)$ of the heat equation as $t\rightarrow +\infty$. The diffusion phenomena has also been discussed by Nishihara \cite{Nishihara97} and  Han and Milani \cite{Milani-Han} for
the Cauchy problem cases in $\mathbb{R}$ and $\mathbb{R}^N$, respectively. A more precise asymptotic expansion of the solution of the damped equation was given by \cite{Nishihara03} in
$\mathbb{R}^3$, and  Hosono and Ogawa \cite{Hos-Ogawa} in
$\mathbb{R}^2$:

\EQ{\|u(t)-e^{t\Delta} (u_0+u_1)-e^{-t/2}\{w(t)+(\frac{1}{2}+\frac{t}{8})\widetilde{w}(t)\}\|_{L^p}\leq
Ct^{-3/2(1/q-1/p)-1}(\|u_0\|_{L^q}+\|u_1\|_{L^q})} where $w(t,x)$
and $\widetilde{w}(t,x)$ are the solutions of the linear equation
with the initial data  $w(0,.)=u_0$, $\partial _tw(0,.)=u_1$ and
$\widetilde{w}(0,.)=0$, $\partial_t\widetilde{w}(0,.)=u_0.$\\
For the case of the exterior of a star shape obstacle and when $a(x)$ is uniformly positive near infinity, Mochizuki and Nakazawa \cite{Moch-Nak} have derived
$L^2$-bound and energy decay like (\ref{L2-bound,energy decay}). Nakao \cite{Nakao}  established (\ref{L2-bound,energy decay}) for trapping domains under the assumption that the dissipation is effective near space infinity and on a part of the boundary. Recently, in the same context, Daoulati \cite{Doulati} showed the same result by assuming that each trapped ray meets the damping region which is also effective at space infinity.\\ 
Another direction of investigation was developed considering weighted initial data. In \cite{Ikeh-Mats02}, Ikehata and Matsuyama have derived, when $a(x)\equiv1$, a decay rate like
\EQ{
 E(u;t)\leq C(1+t)^{-2},\qquad  \|u(t)\|_{L^2_x}^2\le C(1+t)^{-1},\label{fast decay}}
which is faster than \eqref{L2-bound,energy decay}. In \cite{Ryo03}, Ikehata obtained the estimate \eqref{fast decay} for solutions of the system \eqref{DLW} with weighted initial data and assuming that $a (x)\geq a_0 > 0$ at infinity and $O = \mathbb{R}^N \setminus\Omega$ is star shaped with respect to the origin. This result has been improved by Daoulatli \cite{Doulati} under the geometric control condition on the damping region, without any assumption on the shape of the boundary. Clearly, from the results of \cite{Doulati,Ryo03}, our estimate in Theorem \ref{main thm 2}  is not  optimal in the case $N=2$.\\ 
Furthermore, Zuazua \cite{Zua90} treated the nonlinear Kleine-Gordon
equations with dissipative term. He derived the exponential decay of energy through the weighted energy method. This result was later on generalized by Aloui, Ibrahim and Nakanishi \cite{Al-Ib-Na} to more general nonlinearities.

This paper is organized as follows. In section two, we introduce the notation we use along the paper, and recall basic definitions and a few lemmas necessary to prove our results. To derive the uniform decay of the local energy, we use a crucial result due to Shibata. In order to be able to use this result, we need to estimate the resolvent  in the case of low frequencies. This is the content of subsection 3.1.  In subsections 3.2 and 3.3 we estimate the resolvent estimates in the case of intermediate and high frequencies, respectively. The uniform local and total energy decays are proved in section 4 and section 5, respectively. 


\section{Notation and useful tools}   \label{S-2}

For $r>0$, recall that $B_r$ is the ball of radius $r$ and centered at the origin, and let $\Omega_r$ denote the set $\Om\cap B_r$.\\
For any functional space $X$ defined on $\Om$, denote by $X_r$ the set of functions in $X$ with support in  $\Om_r$. $X_{\text{comp}}$ denotes the space of function in $X$ with compact support and $X_{\text{loc}}$ the set of functions $u$ in $X$ such that $\varphi u\in X$, for all smooth $\varphi$ with support in $\Om$. For $k=1,2\cdot\cdot$, the space $H^k(\Omega )$ denotes the standard Sobolev space of order $k$. Let 
$ H^D(\Omega )$ denote the completion of $ C_0^\infty(\Omega)$ with respect to  the norm $\|\nabla\cdot\|_{L^2}$, and define
\begin{eqnarray}
\label{space H}
{\mathcal H}:=(H^D\times L^2)(\Omega),\quad\mbox{with the norm}\quad \|(u_1,u_2)\|_{\mathcal H}:=\|\nabla u_1\|_{L^2}+\|u_2\|_{L^2}.
\end{eqnarray}
\begin{defn}
\label{GCC}\cite{le}
We say that the damper $a$ satisfies the geometric control condition {\bf GCC} if there exists $T_0 > 0$ such that from every point in $\Omega$ the generalized geodesic meets the set
$\omega =\left\{ x\in \Omega ,a(x)>0\right\} $ in a time $t < T_0$.

\end{defn}
This definition was extended by Aloui-Khenissi \cite{Al-Kh} to the case of exterior domains. 

\begin{defn}\cite{Al-Kh}
\label{EGC}
We say that the damper $a$ satisfies the exterior geometric control condition {\bf EGC} if every generalized geodesic
has to leave any fixed bounded region or meet the damping region $\omega =\left\{ x\in \Omega ,a(x)>0\right\} $ in uniform finite time.

\end{defn}

For two Banach spaces $X$ and $Y$  let $\mathcal{L}(X,Y)$ denote the set of all
bounded linear operators from $X$ into $Y$ and $\mathcal{L}(X):=\mathcal{L}%
(X,X)$. For any domain $Q$ in $\mathbb{C}$, Anal $(Q,X)$ denotes the set of
all holomorphic functions defined on $Q$ with their values in X.

\begin{defn}[see \cite{Shib}]    

For a Banach space $\mathcal B$, $k=1,2\cdot\cdot$ and $0<\sigma\leq1$, let $\alpha=k+\sigma$ and $\mathcal C^\alpha=\mathcal C^\alpha(\R;\mathcal B)$ endowed with the norm $\|\cdot\|_{\alpha,\mathcal B}$ of the usual Besov space $B^\alpha_{1,\infty}$ given by: If $0<\sigma<1$

$$
\|f\|_{\alpha,\mathcal B}=\sum_{j=0}^k\int_\R\|\big(\frac d{d\tau}\big)^ju(\tau)\|_{\mathcal B}\;d\tau+\sup_{h\neq0}|h|^{-\sigma}   \int_\R\|\big(\frac d{d\tau}\big)^ku(\tau+h)-\big(\frac d{d\tau}\big)^ku(\tau)\|_{\mathcal B}\;d\tau.
$$
If $\sigma=1$,

\begin{eqnarray*}
\|f\|_{\alpha,\mathcal B}&=&\sum_{j=0}^k\int_\R\|\big(\frac d{d\tau}\big)^ju(\tau)\|_{\mathcal B}\;d\tau\\&+&\sup_{h\neq0}|h|^{-1}   \int_\R\|\big(\frac d{d\tau}\big)^ku(\tau+2h)-2\big(\frac d{d\tau}\big)^ku(\tau+h)+\big(\frac d{d\tau}\big)^ku(\tau)\|_{\mathcal B}\;d\tau.
\end{eqnarray*}

\end{defn}

In our analysis, a crucial role will be played by the resolvent of the Dirichlet Laplace operator in exterior domain. More precisely, let $\hat u$ solve the following exterior Dirichlet problem%

\begin{eqnarray}\label{resolvent Delta}
\left\{
\begin{tabular}{l}
$(\tau -\Delta )\hat u=f\text{ \ in }\Omega $  \\
$\hat u_{|\partial \Omega }=0,$
\end{tabular}
\right.
\end{eqnarray}
and denote by $A(\tau ):=(\tau -\Delta )^{-1}$ be its resolvent defined for all $\tau$ in the set
$$
\tau \in S_{\rho,\varepsilon }=\{\tau \in\mathbb C\backslash \{0\};\;|\tau |<\rho,\;|\arg \tau |<\pi -\varepsilon \},\;0<\rho<1\quad\mbox{and}\quad0<\varepsilon <\pi /2.
$$
For small values of $\tau$, the behavior of the resolvent $A(\tau)$ is given by the following lemma due to Vainberg \cite[Theorem 2 and Theorem 3]{vainb} (see also \cite{tsut}).

\begin{lem}\label{vainb lem}
Let $N\geq 3.$ There exists $\rho$ and $\varepsilon $ such that $%
A\in $Anal$(S_{\rho,\varepsilon };\mathcal L(L^2_{\text{comp}},H^{2}(\Omega ))$ and for $0<r'<r$, we have:

\begin{enumerate}
\item If $N$ is odd, then we have%
\begin{equation}
A(\tau)=\sum_{j=0}^\infty B_{j}\tau^j+\tau ^{\frac{N-2}{2}}\sum_{j=0}^\infty B_{j}'\tau^j \text{ }
\label{pai1}
\end{equation}%
in the region $S_{\rho,\varepsilon } $, where the operators $B_{j}$ and $B'_{j}$ ( $j=0,1,\cdots )$ are bounded linear operators from $L_{r}^{2}(\Omega)$ to $H^{2}(\Omega _{r^{\prime }})$ and the expansion (\ref{pai1}) converges uniformly and absolutely in the operator norm. 

\item If $N$ is even, then we have 
\begin{equation}
A(\tau )=\sum_{j=0}^\infty \tilde{B}_{j}\tau^j+\sum_{m=1}^\infty (\tau ^{\frac{N-2}{2}}\log \sqrt{\tau })^m\tilde{B}_{m}'(\tau)  \label{imp1}
\end{equation}%
in the region $ S_{\rho,\varepsilon } $, 
where the operators $\tilde{B}_{j}$  ( $j=0,1,\cdots )$ are bounded linear operators from $L_{r}^{2}(\Omega)$ to $H^{2}(\Omega _{r^{\prime }})$ and for $m=1,2,\cdots$, $\tilde{B}_{m}'(\tau)$ is in Anal($\{|\tau|<\varepsilon\};\mathcal L(L_{r}^{2}(\Omega ),H^{2}(\Omega _{r^{\prime
}}))$ and the expansion (\ref{imp1}) converges uniformly and absolutely in the operator norm.
\end{enumerate}
\end{lem}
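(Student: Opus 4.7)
The plan is to build a parametrix for $A(\tau)$ by gluing the free-space resolvent on $\R^N$ to an interior Dirichlet resolvent near the obstacle, and then to promote it to the true resolvent via the analytic Fredholm theorem.

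First I would record the small-$\tau$ behaviour of the free resolvent $R_0(\tau)=(\tau-\De)^{-1}$ on $\R^N$. Its Schwartz kernel is a multiple of $|x-y|^{-(N-2)/2}H^{(1)}_{(N-2)/2}(\sqrt{-\tau}\,|x-y|)$, and the standard expansion of the Hankel function at the origin produces, in odd dimension $N$, an entire part in $\tau$ plus a branched tail of the form $\tau^{(N-2)/2}\times(\text{entire})$, and, in even dimension, an entire part plus contributions of the form $(\tau^{(N-2)/2}\log\sqrt{\tau})^m$. Sandwiching $R_0(\tau)$ between compactly supported cut-offs yields exactly the expansions (\ref{pai1}) and (\ref{imp1}) for $R_0$ in place of $A$, as elements of $\mathrm{Anal}(S_{\rho,\varepsilon};\L(L^2_r,H^2(\Om_{r'})))$.

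Next I would fix cut-offs $\chi_0,\chi_1\in C^\infty(\Om)$ with $\chi_0+\chi_1\equiv 1$, $\chi_0\equiv 1$ near $\p\Om$ and compactly supported, together with $\psi_i\in C^\infty(\Om)$ satisfying $\psi_i\equiv 1$ on $\mathrm{supp}\,\chi_i$. Let $R_{\mathrm{in}}$ denote the Dirichlet inverse of $-\De$ on a bounded domain containing $\mathrm{supp}\,\psi_0$, and set
\EQ{
E(\tau)\;:=\;\psi_0\,R_{\mathrm{in}}\,\chi_0\;+\;\psi_1\,R_0(\tau)\,\chi_1.
}
A direct computation gives $(\tau-\De)E(\tau)=I+K(\tau)$, where $K(\tau)$ is the sum of a term proportional to $\tau$ and two commutator terms $[\De,\psi_i](\cdots)$ supported in a fixed compact set. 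Hence $K(\tau)$ inherits from $R_0(\tau)$ an expansion of the type (\ref{pai1}) or (\ref{imp1}), and by Rellich's compactness theorem it takes values in compact operators on $L^2_r$. Invertibility of $I+K(0)$ on $L^2_r$ follows from the absence of zero-resonances for the exterior Dirichlet Laplacian in dimension $N\geq 3$: any $\varphi$ in its kernel yields, via $E(0)\varphi$, a harmonic function on $\Om$ with zero Dirichlet trace and Newtonian decay at infinity, hence identically zero. The analytic Fredholm theorem therefore gives $(I+K(\tau))^{-1}$ on a possibly smaller sector, and $A(\tau)=E(\tau)(I+K(\tau))^{-1}$.

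The main obstacle is the algebraic step of showing that inversion preserves the singular expansion class. Concretely, in odd dimension one must work in the ring of formal series in $\tau$ and $\tau^{(N-2)/2}$, and in even dimension in the ring generated by $\tau$ and $\tau^{(N-2)/2}\log\sqrt{\tau}$; the point is that $K(\tau)-K(0)$ is a small perturbation in this ring for $\tau$ near $0$, so the Neumann series for $(I+K(\tau))^{-1}$ converges there and produces expansions of the stated form. Checking the closure of the ring under composition and the convergence estimates uniformly on $S_{\rho,\varepsilon}$ is the most delicate part, and it is precisely here that one must refine $\rho$ and $\varepsilon$ to absorb the multi-valuedness of $\tau^{(N-2)/2}$ and $\log\sqrt{\tau}$.
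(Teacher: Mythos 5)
You should first note that the paper does not actually prove this lemma: it is quoted from Vainberg \cite{vainb} (see also Tsutsumi \cite{tsut}), so the comparison has to be with those references. Your parametrix--plus--analytic-Fredholm outline is indeed the standard route taken there (expansion of the free resolvent kernel, gluing with an interior Dirichlet inverse, Fredholm inversion, closure of the singular expansion class under inversion), so the architecture is right. As written, however, the proposal has two genuine gaps.

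First, your injectivity argument for $I+K(0)$ does not close. From $(I+K(0))\varphi=0$ you correctly obtain that $u:=E(0)\varphi$ is harmonic in $\Om$, vanishes on $\partial\Om$, and decays like $|x|^{2-N}$, hence $u\equiv0$ for $N\geq3$; but $E(0)\varphi=0$ does not imply $\varphi=0$, since $E(0)$ is only a parametrix and has no reason to be injective. What you actually need is to return to $\varphi=-K(0)\varphi=[\De,\psi_0]R_{\mathrm{in}}\chi_0\varphi+[\De,\psi_1]R_0(0)\chi_1\varphi$ and show that the two pieces $R_{\mathrm{in}}\chi_0\varphi$ and $R_0(0)\chi_1\varphi$ vanish separately on the supports of $\nabla\psi_0$ and $\nabla\psi_1$, whereas $u\equiv0$ only controls their sum. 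The standard repairs are either to arrange the cutoffs so that a power of $K(0)$ has small norm, or to run the analytic Fredholm theorem on a connected region reaching into $\{\Re\tau>0\}$, where invertibility of $I+K(\tau)$ can be deduced from the known invertibility of $\tau-\De$; for the latter you should take the interior piece to be the $\tau$-dependent resolvent $(\tau-\De_{\mathrm{in}})^{-1}$ rather than $(-\De_{\mathrm{in}})^{-1}$, because your term $\tau\psi_0R_{\mathrm{in}}\chi_0$ is not small for large $\tau$. One then excludes a pole at $\tau=0$ by the absence of zero resonances.

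Second, the step you yourself describe as ``the most delicate part''---that Neumann inversion of $I+K(\tau)$ stays inside the class \eqref{pai1}, respectively \eqref{imp1}, with uniform and absolute convergence on $S_{\rho,\varepsilon}$---is precisely the content of Vainberg's theorem, and deferring it leaves a plan rather than a proof. In particular, for $N$ even you must verify that the logarithms generated by the Neumann series organize themselves exactly into powers of $\tau^{(N-2)/2}\log\sqrt{\tau}$ with coefficients $\tilde B_m'(\tau)$ that remain analytic across $\tau=0$, and that the resulting double series converges because $|\tau^{(N-2)/2}\log\sqrt{\tau}|\to0$ as $\tau\to0$; this is exactly where the hypothesis $N\geq3$ enters, and it is the reason the case $N=2$ (which the paper needs elsewhere via Lemma \ref{resolvent reg}) is not covered by the stated expansion.
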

In order to establish a relation between the resolvent $A$ and the resolvent associated to \eqref{DLW}, let us first consider $u$ solving \eqref{DLW} with the initial data $u(0)=0$ and $\partial_tu(0)=f$. Then define
\begin{equation}
R_a(\lambda )f=\int_{0}^{+\infty }e^{-\lambda t}u(t)dt,\text{ \ }\mbox{ Re }%
\lambda >0,  \label{semi}
\end{equation}
It is clear that the relation \eqref{semi} defines a family of bounded linear operators from $L^2(\Omega )$ to $L^{2}(\Omega )$, analytic on the set $\mathbb C^+:=\ \left\{ \lambda \in \mathbb{C;}\text{ }\mbox{
Re}\;\lambda >0\right\} $. Moreover, when the damper $a$ is effective everywhere i.e. $a\equiv 1$, then we have the following obvious relation

\begin{eqnarray}
\label{relation1}
A(\lambda(\lambda+1))=R_1(\lambda).\quad 
\end{eqnarray}
In the general case when $a(x)=1$ only for all $|x|\geq r_0$, consider $\chi \in C_{0}^{\infty }(\mathbb{R}^{N}),$ and define 
the truncated resolvent
\begin{equation*}
R_{a,{\chi }}(\lambda )=\chi R_a(\lambda )\chi
\end{equation*}%
seen as an  analytic  function on $\mathbb C^+$ with values in the space of bounded operators from $L^2(\Omega )$ to $L^{2}(\Omega )$.
In order to reformulate problem \eqref{DLW} in the semigroup setting, we set $v = u_t$ and $U=\left(
\begin{tabular}{l}
$u$ \\ $v$ 
\end{tabular}
\right)$. Then, in matrix form \eqref{DLW} reads 
\begin{equation}\label{semig}
\left\{
\begin{tabular}{ll}
$(\partial _{t}-B_a)U(t)=0$ & on $\mathbb{R}_{+}\times \Omega $ \\
$U_{|_{\partial \Omega }}=0$ & on $\mathbb{R}_{+}$ \\
$U(0)=U_{0}\in \mathcal H,$ &
\end{tabular}
\right.
\end{equation}
where we have set

\begin{equation*}
B_a=\left(
\begin{tabular}{ll}
$0$ & $I$ \\
$\Delta $ & $-aI$%
\end{tabular}
\right).
\end{equation*}
The relation between the resolvents $(\tau-B_a)^{-1}$ and $R_a$ is given by (see \cite{Kh} )

\begin{equation}
\label{relation2}
\mathcal{R}(\lambda ):=(\lambda I- B_a)^{-1}=\left( 
\begin{array}{cc}
 R_a(\lambda )(\lambda +a) & R_a(\lambda ) \\ 
  R_a(\lambda)\lambda(\lambda +a)-1 & \lambda R_a(\lambda )%
\end{array}%
\right) .
\end{equation}
One of the crucial steps in our proof is to show that the resolvent $R_a(\lambda)$ is in the class $\mathcal C^\frac{ N}{2}$ with uniform estimates with respect to the real part $\alpha$ of $\lambda$ when $\lambda=\alpha+is$ is in a compact set. To have that, we use the following important result due to Shibata \cite{Shib} (see Theorem 3.2) which 

\begin{lem}
\label{Shihb lem}
Let $p$ be an integer and $I=(-2,2)$. Consider a Banach space   $(\mathcal B,|\cdot|)$, and assume that $f\in\mathcal C^\infty(\R\setminus0;\mathcal B)\cap\mathcal C^{p-1}_0(I;\mathcal B)$ with 
\begin{itemize}
\item Assume $k=p+\sigma$ with $0<\sigma<1$ and $f$ satisfies: for $\tau\in I\setminus0$
$$
|\big(\frac d{d\tau}\big)^jf(\tau)|_{\mathcal B}\leq C(f),\quad j=0,1,\cdot\cdot p-1,
$$

$$
|\big(\frac d{d\tau}\big)^pf(\tau)|_{\mathcal B}\leq C(f)|\tau|^{\sigma-1},\quad |\big(\frac d{d\tau}\big)^{p+1}f(\tau)|_{\mathcal B}\leq C(f)|\tau|^{\sigma-2}.
$$
Then, $f\in\mathcal C^k(\mathbb R;\mathcal B)$ and
\begin{eqnarray}
\label{unif est1}
\|f\|_{k,\mathcal B}\leq C(\sigma,p)C(f).
\end{eqnarray}
\item  Assume $k=p+1$ and for $\tau\in I\setminus0$ $f$ satisfies $\big(\frac d{d\tau}\big)^pf(\tau)=f_0\log|\tau|+f_1$ and
$$
|\big(\frac d{d\tau}\big)^ju(\tau)|_{\mathcal B}\leq C(f),\quad j=0,1,\cdot\cdot p-1,
$$

$$
|f_0|_{\mathcal B}+|f_1|_{\mathcal B} \leq C(f),\quad  |\big(\frac d{d\tau}\big)^{p+j}f(\tau)|_{\mathcal B}\leq C(f)|\tau|^{-j},\quad j=1,2.
$$
Then, $f\in\mathcal C^k(\mathbb R;\mathcal B)$ and
\begin{eqnarray}
\label{unif est11}
\|f\|_{k,\mathcal B}\leq C(\sigma,p)C(f),\quad |f(\tau)-f(0)|_{\mathcal B}\leq C(p,\sigma)c(f)|\tau|^\frac12.
\end{eqnarray}
\end{itemize}
Here $C(\cdot)$ denotes a constant that may depend on its argument.
\end{lem}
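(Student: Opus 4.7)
The plan is to verify the Besov norm bound directly from its definition, using the pointwise estimates on the derivatives of $f$ and exploiting the compact support of $f$ in $I=(-2,2)$.

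For the first assertion, with $k=p+\sigma$ and $0<\sigma<1$, the norm $\|f\|_{k,\mathcal B}$ decomposes into (i) the $L^1(\R;\mathcal B)$ norms of $f^{(j)}$ for $j=0,\ldots,p$, and (ii) the weighted modulus $\sup_{h\neq 0}|h|^{-\sigma}\int_\R |f^{(p)}(\tau+h)-f^{(p)}(\tau)|_{\mathcal B}\,d\tau$. Part (i) is immediate: for $j\le p-1$ the integrand is bounded by $C(f)$ on the bounded interval $I$, while for $j=p$ the bound $C(f)|\tau|^{\sigma-1}$ is integrable on $I$ since $\sigma>0$. For part (ii), the idea is to split the integration at $|\tau|=2|h|$. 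On $|\tau|\le 2|h|$, the triangle inequality combined with $|f^{(p)}(\tau)|_{\mathcal B}\le C(f)|\tau|^{\sigma-1}$ yields an $O(|h|^\sigma)$ contribution after the change of variable $\tau\mapsto\tau+h$ when needed. On $|\tau|>2|h|$, the mean value theorem, together with $|f^{(p+1)}(\tau)|_{\mathcal B}\le C(f)|\tau|^{\sigma-2}$ and the observation that the intermediate point $\xi$ satisfies $|\xi|\ge|\tau|/2$, gives $|h|\int_{|\tau|>2|h|}|\tau|^{\sigma-2}\,d\tau=O(|h|^\sigma)$. Dividing by $|h|^\sigma$ and taking the supremum yields (\ref{unif est1}).

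For the second assertion, with $k=p+1$, the relevant modulus is the second-difference type $\sup_{h\neq 0}|h|^{-1}\int|f^{(p)}(\tau+2h)-2f^{(p)}(\tau+h)+f^{(p)}(\tau)|_{\mathcal B}\,d\tau$. Since $f_0,f_1$ are $\tau$-independent, the constant $f_1$ has vanishing second difference and only the $f_0\log|\tau|$ piece contributes. I would again split at $|\tau|=3|h|$. On $|\tau|>3|h|$, writing the second difference as $\int_0^h\int_0^h f^{(p+2)}(\tau+s+t)\,ds\,dt$ and applying $|f^{(p+2)}(\tau)|_{\mathcal B}\le C(f)|\tau|^{-2}$ via Fubini produces an $O(|h|)$ contribution (using that $\int_{|\tau|>3|h|}|\tau|^{-2}\,d\tau=O(|h|^{-1})$). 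On $|\tau|\le 3|h|$, the change of variable $\tau=hu$ reduces the logarithmic piece to $|h|\,|f_0|_{\mathcal B}\int_{-3}^{3}\bigl|\log\frac{|u+2||u|}{|u+1|^2}\bigr|\,du$, a finite constant times $|h|$. Division by $|h|$ gives the uniform estimate in (\ref{unif est11}).

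For the auxiliary H\"older bound $|f(\tau)-f(0)|_{\mathcal B}\le C(p,\sigma)C(f)|\tau|^{1/2}$, when $p\ge 2$ the first derivative is bounded by assumption and integration yields the stronger $O(|\tau|)$; when $p=1$, $f'(\tau)=f_0\log|\tau|+f_1$ and $\int_0^{|\tau|}(|f_0|_{\mathcal B}|\log s|+|f_1|_{\mathcal B})\,ds=O(|\tau|\log(1/|\tau|))$, which is absorbed into $|\tau|^{1/2}$ for small $|\tau|$ and handled by uniform boundedness for $|\tau|$ of order one. The main obstacle I expect is the correct treatment of the logarithmic singularity in the second-difference estimate: the naive Taylor bound $|h|^2\|f^{(p+2)}\|_{L^1}$ is inapplicable because $|\tau|^{-2}$ is not integrable near the origin, so the domain splitting and the rescaling $\tau=hu$ described above are essential to absorb the singularity. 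Once those are in place, the remaining estimates are routine applications of the triangle inequality and Fubini.
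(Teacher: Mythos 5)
The paper does not prove this lemma at all: it is quoted verbatim from Shibata (Theorem~3.2 of \cite{Shib}) and used as a black box, so your self-contained argument is necessarily a ``different route,'' and it is essentially the correct one. Your estimates are sound: for $k=p+\sigma$, the splitting at $|\tau|=2|h|$ with the triangle inequality on the inner region and the mean value inequality (valid for Banach-space-valued functions in the form $|f(b)-f(a)|_{\mathcal B}\le|b-a|\sup|f'|_{\mathcal B}$) on the outer region is exactly what makes $|\tau|^{\sigma-1}$ and $|h||\tau|^{\sigma-2}$ both integrate to $O(|h|^\sigma)$; and for $k=p+1$ you correctly identify the one genuinely delicate point, namely that the second difference of $\log|\tau|$ must be handled by the rescaling $\tau=hu$, which exploits the cancellation $1-2+1=0$ of the $\log|h|$ terms and yields the clean $O(|h|)$ bound that a naive triangle inequality (giving $O(|h|\log(1/|h|))$) would miss. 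Three routine points you should still record to make this airtight: (i) the supremum over $h$ must also cover $|h|\gtrsim 1$, where the modulus term is trivially bounded by $2\|f^{(p)}\|_{L^1}$ (respectively $4\|f^{(p)}\|_{L^1}$) thanks to the compact support in $I$; (ii) the claim $f\in\mathcal C^k(\R;\mathcal B)$ requires checking that $f^{(p-1)}$ is absolutely continuous across $\tau=0$ with derivative $f^{(p)}$, which follows from the integrability of $|\tau|^{\sigma-1}$ (resp.\ $\log|\tau|$) near the origin, so that no singular part is created at $0$; (iii) as stated, constant $f_0,f_1$ are incompatible with $f$ having compact support in $I$, so $f_1$ (and in principle $f_0$) should be read as bounded functions of $\tau$ --- this does not damage your argument, since the inner-region contribution of $f_1$ is then controlled directly by $|f_1|_{\mathcal B}\le C(f)$ and the measure $O(|h|)$ of the region, while the outer region uses only the derivative bounds $|f^{(p+j)}|_{\mathcal B}\le C(f)|\tau|^{-j}$, but it should be said. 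Your treatment of the H\"older bound $|f(\tau)-f(0)|_{\mathcal B}\le C|\tau|^{1/2}$, distinguishing $p\ge2$ from $p=1$ and absorbing $|\tau|\log(1/|\tau|)$ into $|\tau|^{1/2}$, is fine.
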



\section{The resolvent estimates}
In this section, we estimate the resolvent. The analysis is done in three steps.
\subsection{Low frequencies}
First we derive the behavior of the resolvent near $\lambda\sim0$. It turns out that in this case, the asymptotic is the same as the one corresponding to the constant damper  $a(x)\equiv1$. More precisely, we have the following lemma.

\begin{lem}
\label{resolvent reg}
Let $N\geq 2.$ Then there exist  $\rho'>0$ and $\varepsilon'>0$ such that the resolvent 
$$
R_a(\lambda )\in {\rm Anal}(S_{\rho',\varepsilon '};\mathcal L(L^2_{comp},H^{2}(\Omega ))
$$ 
and

\begin{enumerate}
\item if $N$ is odd, then%
\begin{equation}
R_a(\lambda )=G_{1}(\lambda )+\lambda ^{\frac{N-2}{2}}G_{2}(\lambda )\text{ }
\label{pai2}
\end{equation}%
in the region $S_{\rho',\varepsilon '}$, where $G_{1}(\cdot )$ and $G_{2}(\cdot )$ are in 
$$
{\rm Anal}(\{\lambda \in \mathbb{C}:\; |\lambda|<\varepsilon' \},\mathcal L(L_{r}^{2}(\Omega);H^{2}(\Omega _{r^{\prime }}))).
$$ 

\item if $N$ is even, then we have 
\begin{equation}
R_a(\lambda )=G_{3}(\lambda )+\sum_{m=1}^\infty (\lambda^{\frac{N-2}{2}}\log \sqrt{\lambda })^m\tilde{G}_{m}'(\lambda)  \label{imp2}
\end{equation}%
in the region $S_{\rho',\varepsilon '},$ 
where $G_{3}(\cdot )$ and $\tilde{G}_{m}'(\cdot )$, ($m=1,2,\cdots$), are in 
$$
{\rm Anal}(\{\lambda \in \mathbb{C}:\; |\lambda|<\varepsilon' \},\mathcal L(L_{r}^{2}(\Omega);H^{2}(\Omega _{r^{\prime }}))).
$$
\end{enumerate}
\end{lem}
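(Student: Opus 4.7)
The plan is to treat $a$ as a compactly supported perturbation of the constant damper and reduce the analysis to Lemma \ref{vainb lem}. Since $a(x)=1$ for $|x|>r_0$, the function $a-1$ has support in $\Om_{r_0}$; writing $V(\lambda):=\lambda(a-1)$ one has $\lambda^2+a\lambda-\De=(\lambda^2+\lambda-\De)+V(\lambda)$ on the common Dirichlet domain, and the resolvent identity combined with \eqref{relation1} gives
\begin{equation*}
R_a(\lambda)=\bigl[I+A(\lambda(\lambda+1))V(\lambda)\bigr]^{-1}A(\lambda(\lambda+1)).
\end{equation*}

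The first step is to transport Lemma \ref{vainb lem} to the variable $\lambda$ via $\tau=\lambda(\lambda+1)$. This map is holomorphic near $0$, vanishes at $\lambda=0$, and sends a sufficiently small sector $S_{\rho',\varepsilon'}$ into $S_{\rho,\varepsilon}$. Using the factorisations $\tau^{(N-2)/2}=\lambda^{(N-2)/2}(1+\lambda)^{(N-2)/2}$ and $\log\sqrt\tau=\tfrac12\log\lambda+\tfrac12\log(1+\lambda)$, in which $(1+\lambda)^{(N-2)/2}$ and $\log(1+\lambda)$ are holomorphic at $\lambda=0$, and substituting into \eqref{pai1}--\eqref{imp1}, the Vainberg expansion takes exactly the announced form \eqref{pai2}--\eqref{imp2} after regrouping. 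In the even case, the binomial expansion of $(\log\sqrt\tau)^m=(\log\sqrt\lambda+\log\sqrt{1+\lambda})^m$ produces intermediate powers $(\log\sqrt\lambda)^k$ with $k<m$ which, together with their extra $\lambda^{(N-2)/2}$ factors, are absorbed into the coefficients of lower index.

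Next I would invert $I+K(\lambda)$ with $K(\lambda):=A(\lambda(\lambda+1))V(\lambda)$ by a Neumann series. Because $(a-1)$ localises inputs to $\Om_{r_0}$, the composition $(a-1)A(\lambda(\lambda+1))$ is a bounded operator on $L^2_{r_0}$ uniformly in $\lambda$ (by step $1$), and the prefactor $\lambda$ makes its norm $o(1)$ as $\lambda\to 0$. A Neumann series on $L^2_{r_0}$ then yields
\begin{equation*}
(I+K(\lambda))^{-1}=I-\lambda A(\lambda(\lambda+1))\bigl[I+\lambda(a-1)A(\lambda(\lambda+1))\bigr]^{-1}(a-1).
\end{equation*}
Since the class of expansions \eqref{pai2}--\eqref{imp2} is closed under composition---for odd $N$ because $\lambda^{(N-2)/2}\cdot\lambda^{(N-2)/2}=\lambda^{N-2}$ is an integer power of $\lambda$ and hence analytic, and for even $N$ by the semigroup property of the monomial $\lambda^{(N-2)/2}\log\sqrt\lambda$---the inverse $(I+K(\lambda))^{-1}$, and hence $R_a(\lambda)=(I+K(\lambda))^{-1}A(\lambda(\lambda+1))$, admits an expansion of the required form.

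The main obstacle I expect is the case $N=2$, for which Lemma \ref{vainb lem} is not directly available: there $(N-2)/2=0$, the low-frequency expansion of the exterior Dirichlet Laplacian is purely logarithmic, and it must be established separately (via Hankel-function representations of the free resolvent and classical limiting-absorption arguments adapted to the plane) before the perturbative scheme above applies. A secondary technical point is the choice of a coherent pair of localised Sobolev spaces $(L^2_r,H^2(\Om_{r'}))$ kept fixed throughout the Neumann iterations, with $r$ and $r'$ both accommodating the support of $a-1$, so that the support-localising effect of $(a-1)$ and the composition rules used above hold uniformly for $\lambda\in S_{\rho',\varepsilon'}$.
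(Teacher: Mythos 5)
Your proposal is correct and follows essentially the same route as the paper: the second resolvent identity linking $R_a(\lambda)$ to $R_1(\lambda)=A(\lambda(\lambda+1))$ through the compactly supported perturbation $\lambda(a-1)$, inversion of the resulting operator by a Neumann series for small $\lambda$, and transport of the Vainberg expansion of Lemma \ref{vainb lem} via the substitution $\tau=\lambda(\lambda+1)$ (the paper writes the identity in the equivalent form $R_{a,\chi}=R_{1,\chi}[I+\lambda(a-1)R_{1,\chi}]^{-1}$). Your caveat about $N=2$ is well taken: Lemma \ref{vainb lem} is stated only for $N\geq 3$ while Lemma \ref{resolvent reg} claims $N\geq 2$, and the paper's own proof passes over this point in silence, so the purely logarithmic two-dimensional expansion would indeed need a separate justification.
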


\begin{proof}
The proof uses Lemma \ref{vainb lem} and the idea is to establish the relation between the resolvents $R_1$ and $R_a$. Observe that 
 $u:={R_1}(\lambda )f$ can be written as the solution of the system
\begin{equation*}
\left\{ 
\begin{array}{l}
(-\Delta +\lambda ^{2}+\lambda a(x))u=f+\lambda (a(x)-1)u \\ 
u_{|\partial \Omega }=0.%
\end{array}%
\right. 
\end{equation*}%
Hence
\begin{equation*}
\begin{array}{ll}
{R_1}(\lambda )f=u 
& =R_a(\lambda )(f+\lambda (a(x)-1){R_1}(\lambda )f) \\ 
& =R_a(\lambda )[I+\lambda (a(x)-1){R_1}(\lambda )]f.%
\end{array}%
\end{equation*}%
Since $a(x)$ $\equiv 1$ on the complement of the ball $B_{r_{0}}$, we deduce that for all $\chi \in C_{0}^{\infty }(\Omega )$ such that $\chi
\equiv 1$\ on $B_{r_0}$ and for all $f\in L^{2} $ supported on $B_{r}$ with $r>r_0$, we have

\begin{eqnarray}
\label{relation3}
{R}_{a,\chi }(\lambda )=R_{1,\chi }(\lambda )[I+\lambda (a(x)-1)%
{R}_{1,\chi }(\lambda )]^{-1}.
\end{eqnarray}%
The proof is now immediate using Lemma  \ref{vainb lem} and relations \eqref{relation1} and \eqref{relation2}. 
Indeed, we take $r^{\prime }<r<1$ and $\varepsilon^{\prime }<\pi /2$ small enough such that  $\lambda (\lambda +1)\in S_{r,\varepsilon }$ when
$\lambda \in S_{r^{\prime },\varepsilon ^{\prime }}$. 

\end{proof}

For any $\delta>0$, put $Q_\delta=\{\lambda\in\mathbb{C}; 0<Re\lambda<\delta,|Im\lambda|<\delta\}.$ Then we have the following result.

\begin{lem}\label{low.freq}

There exists $\delta>0$, such that
\begin{enumerate}
    \item $\mathcal{R}(\lambda)\in Anal(Q_\delta;\mathcal{L}(\mathcal{H}_{com},\mathcal{H}_{loc}))$
    \item For $\rho(s)\in C^\infty_0(\mathbb{R})$
    such that $\rho(s)=1$ if $|s|<\delta/2$ and $\rho=0$ if $|s|>\delta$, there
    exists $M_1>0$ depending on $r,\rho$ and $\delta$ such that for any $f\in \mathcal{H}_r$ and
$0<\beta<\delta$, we have
    \EQ{\| \rho(.)\chi\mathcal{R}(\beta+i.)f\|_{N/2} \leq
M_1\|f\|_{\mathcal H},}
where we set $\|\cdot\|_\alpha:=\|\cdot\|_{\alpha,\mathcal H}$.
\end{enumerate}

\end{lem}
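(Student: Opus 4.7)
The strategy combines the low-frequency expansion of $R_a(\lambda)$ from Lemma \ref{resolvent reg}, the matrix identity \eqref{relation2} relating $\mathcal R(\lambda)$ to $R_a(\lambda)$, and the abstract regularity criterion of Lemma \ref{Shihb lem} applied with $\mathcal B = \mathcal H$.

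I would first fix $\delta>0$ small enough that $Q_\delta$ is contained in the sector $S_{\rho',\varepsilon'}$ from Lemma \ref{resolvent reg}. Each entry of the matrix in \eqref{relation2} is a polynomial of degree at most $2$ in $\lambda$ and in $a$ multiplying $R_a(\lambda)$, so the analyticity of $R_a(\lambda)$ on $S_{\rho',\varepsilon'}$ from $L^2_{\text{comp}}$ into $H^2_{\text{loc}}$ passes to the analyticity of $\chi\mathcal R(\lambda)\chi$ from $\mathcal H_{\text{comp}}$ into $\mathcal H_{\text{loc}}$ on $Q_\delta$. This proves part (1).

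For part (2), I would set $F(s):=\rho(s)\chi\mathcal R(\beta+is)f$, choose $\chi\equiv 1$ on $\Omega_{\max(r,r_0)}$ so that $\chi f = f$, and observe that $F$ is smooth on $\mathbb R\setminus\{0\}$ and compactly supported in $(-\delta,\delta)\subset(-2,2)$. The plan is then to verify the derivative bounds of Lemma \ref{Shihb lem}, splitting into two cases. If $N$ is odd, take $p=(N-1)/2$ and $\sigma=1/2$, so $k=p+\sigma=N/2$; by \eqref{pai2} the singular contribution to $R_a$ is $\lambda^{(N-2)/2}G_2(\lambda)$, whose $j$-th $s$-derivative is bounded (modulo smooth remainders from the analytic factors) by $C\|f\|_{\mathcal H}|\beta+is|^{(N-2)/2-j}$. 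For $j\leq p-1$ the exponent is at least $1/2$ and these terms are uniformly bounded on $Q_\delta$; for $j=p$ and $j=p+1$ the exponents equal $\sigma-1$ and $\sigma-2$, and the elementary inequality $|\beta+is|\geq|s|$ then delivers the pointwise bounds required by the first bullet of Lemma \ref{Shihb lem}. If $N$ is even, take $p=(N-2)/2$ so that $k=p+1=N/2$; the leading singular term from \eqref{imp2} is $\lambda^p\log\sqrt{\lambda}\,\tilde G_1'(\lambda)$, whose $p$-th derivative has the form $f_0\log\lambda+f_1$ with $f_0,f_1$ bounded in $\mathcal L(\mathcal H_r,\mathcal H_{\text{loc}})$, and whose $(p+j)$-th derivatives are controlled by $|\beta+is|^{-j}\leq|s|^{-j}$ for $j=1,2$; the higher powers $m\geq 2$ in \eqref{imp2} contribute strictly more regular pieces. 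This matches the second bullet, and applying Lemma \ref{Shihb lem} gives the desired $\mathcal C^{N/2}$-estimate.

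The main obstacle is the careful bookkeeping through the matrix entries of \eqref{relation2}: one must verify that the extra factors of $\lambda$ and $a$ do not worsen the singular exponents. Multiplication by $\lambda$ raises the exponent in the singular expansion (and hence \emph{improves} the bounds), while multiplication by $a$ preserves it, so no new singularity arises. Uniformity of the constant $M_1$ in $\beta\in(0,\delta)$ then propagates from repeated use of $|\beta+is|\geq|s|$.
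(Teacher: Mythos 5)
Your proposal is correct and follows essentially the same route as the paper: both reduce, via the expansion of Lemma \ref{resolvent reg} and the matrix relation \eqref{relation2}, to verifying the derivative hypotheses of Lemma \ref{Shihb lem} for the singular model terms $(\beta+is)^{(N-2)/2}$ (odd $N$, with $p=(N-1)/2$, $\sigma=1/2$) and $\big((\beta+is)^{(N-2)/2}\log\sqrt{\beta+is}\big)^m$ (even $N$, with $p=(N-2)/2$), using $|\beta+is|\ge|s|$ for uniformity in $\beta$. Your explicit remark that the extra factors of $\lambda$ and $a$ in \eqref{relation2} do not worsen the singular exponents is a point the paper leaves implicit, but the argument is the same.
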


\begin{proof} 
Since $G_1$, $G_2$, $G_3$ and $\tilde{G}_m$ are analytic, then in what follows, we can omit the computation of the norm in $\mathcal H$. Thanks to \eqref{relation2} and using the asymptotic \eqref{pai2}, \eqref{imp2}, it is sufficient to satisfy the conditions of Lemma \ref{Shihb lem} for the truncated resolvent 
$$
\rho(s)(\beta+is)^\frac{N-2}2, \quad\mbox{and}\quad \rho(s)\bigg((\beta+is)^\frac{N-2}2\log(\beta+is)\bigg)^m.
$$
Indeed, if $\frac{N}{2}=p+\frac12$ and for $u(s):= \rho(s)(\beta+is)^{\frac{N}{2}-1}$ we have

$$
\sup_{0\leq\beta\leq1}|\big(\frac d{ds}\big)^ju(s)|_{\mathcal H}\leq C(N,\delta),\quad j=0,1,\cdot\cdot p-1,
$$
and 
$$
|\big(\frac d{ds}\big)^{p}u(s)|_{\mathcal H}\leq C(N)\sup_{|\beta|\leq\delta}\big(|\beta+is|^{p-\frac12}+|s|^{-\frac{1}2}\big) \leq C(N,\delta) |s|^{-\frac{1}2},
$$
$$
|\big(\frac d{ds}\big)^{p+1}u(s)|_{\mathcal H}\leq C(N) \sup_{|\beta|\leq\delta}\big(|\beta+is|^{p-\frac12}+|s|^{-\frac{3}2}\big) \leq C(N,\delta) |s|^{-\frac32}.
$$
If $N$ is even, we write $\frac N2=p+1$ and we only estimate the function $u_1(\tau):=\tau^p\log|\tau|$ as all the other terms behave better. Clearly we have $(\frac{d}{d\tau})^pu_1(\tau)=C_1\log|\tau|+C_2$ and for $j=0,1,\cdot\cdot p-1$ 

$$
(\frac{d}{ds})^ju_1(\beta+is)=P_1(\beta+is)(\beta+is)\log|\beta+is|+P_2(\beta+is),
$$
where $P_1$ and $P_2$ are two polynomials. This implies that for all $j=0,1,\cdot\cdot p-1$

$$
\sup_{0\leq\beta\leq1}|\big(\frac d{ds}\big)^ju_1(\beta+is)|_{\mathcal H}\leq C(N,\delta),\quad j=0,1,\cdot\cdot p-1,
$$
and for $j=1,2$
$$
|\big(\frac d{ds}\big)^{p+j}u_1(\beta+is)|_{\mathcal H}\leq C(N) \sup_{|\beta|\leq\delta}\big(|\beta+is|\log|\beta+is|+|s|^{-j}\big) \leq C(N,\delta) |s|^{-j}
$$
which completes the proof.
\end{proof}


\subsection{Intermediate frequencies}
Recall that $a=1$ in $|x|>r_0$. Consider the reduced equation
\EQ{-\Delta
 w-s^2w+is aw=F\label{eq.reduite}.
}
We have the following {\it \`a priori} estimate for \eqref{eq.reduite}.
 \begin{prop}
 \label{moy.freq} 
 Let $\eta,M>0$.  Then, there exists  $C>0$ such that for any $w$ solution of \eqref{eq.reduite} and a real  number $s$ satisfying $\eta<s<M$, we have 
 \EQ{\|\nabla w\|_{L^2}^2+\|w\|^2_{L^2}\leq
C\|F\|_{L^2}^2.}
In particular, \eqref{eq.reduite} has a unique solution.
\end{prop}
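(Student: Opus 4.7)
I will establish the bound by a standard contradiction-and-compactness scheme adapted to the exterior domain. Suppose the estimate fails; then there exist sequences $(w_n)\subset H^D(\Om)\cap H^2(\Om)$, $s_n\in[\eta,M]$, and $F_n\in L^2(\Om)$ with $-\De w_n-s_n^2 w_n+i s_n a w_n=F_n$, the normalization $\|w_n\|_{L^2}^2+\|\na w_n\|_{L^2}^2=1$, and $\|F_n\|_{L^2}\to 0$. Extracting subsequences, $s_n\to s_\infty\in[\eta,M]$, $w_n\rightharpoonup w$ weakly in $H^1(\Om)$, and by the Rellich--Kondrachov theorem $w_n\to w$ strongly in $L^2_{\text{loc}}(\Om)$.

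The core computation is to test the equation against $\bar w_n$ and split into real and imaginary parts:
\begin{equation*}
\int_\Om|\na w_n|^2-s_n^2\int_\Om|w_n|^2=\Re\int_\Om F_n\bar w_n,\qquad s_n\int_\Om a|w_n|^2=\Im\int_\Om F_n\bar w_n.
\end{equation*}
The imaginary identity together with $s_n\ge\eta$ and the normalization yields $\int_\Om a|w_n|^2\le\eta^{-1}\|F_n\|_{L^2}\to 0$. Since $a\equiv 1$ for $|x|>r_0$, this forces $\|w_n\|_{L^2(\Om\setminus B_{r_0})}\to 0$. Combined with Rellich compactness on $\Om_{r_0}$, this upgrades the convergence to $w_n\to w$ strongly in $L^2(\Om)$.

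Taking the weak limit in the equation then gives $-\De w-s_\infty^2 w+is_\infty a w=0$ in $\Om$ with $w|_{\p\Om}=0$ and $aw\equiv 0$ a.e.; in particular $w\equiv 0$ on the non-empty open set $\Om\cap\{|x|>r_0\}$. Since $w$ solves a second-order elliptic equation with bounded coefficients on the connected domain $\Om$ and vanishes on a non-empty open subset, the classical Aronszajn--Cordes unique continuation principle forces $w\equiv 0$ throughout $\Om$. Hence $w_n\to 0$ strongly in $L^2(\Om)$, and the real-part identity delivers $\|\na w_n\|_{L^2}^2=s_n^2\|w_n\|_{L^2}^2+\Re\int F_n\bar w_n\to 0$, contradicting the normalization.

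The decisive obstacle is promoting the $L^2_{\text{loc}}$ convergence produced by Rellich to strong $L^2$ convergence on the \emph{unbounded} set $\Om$; this is precisely where the hypothesis $a\equiv 1$ near spatial infinity is indispensable (GCC restricted to a compact region would not preclude mass escaping to infinity in the weak limit). The uniqueness claim is immediate from the a priori estimate with $F=0$, while existence follows from a Fredholm argument, viewing the equation as the compactly supported potential perturbation $is(a-1)$ of the invertible operator $-\De-s^2+is$ on $L^2(\Om)$ with Dirichlet boundary condition.
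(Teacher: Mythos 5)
Your proof is correct, but it follows a genuinely different route from the paper. The paper argues directly and quantitatively: it truncates $w$ near the obstacle, observes that the compactly supported damper $a_1=\chi_1 a$ still satisfies the {\bf GCC}, and invokes the stabilization/resolvent estimate of Aloui--Khenissi \cite{Al-Kh} to control $w$ on $\Omega_r$; the exterior region is then handled by the multipliers $\bar w$ and $\varphi\Delta\bar w$ (real and imaginary parts), and the pieces are absorbed into one another. You instead run a contradiction--compactness scheme: the imaginary-part identity plus $a\equiv1$ for $|x|>r_0$ kills the mass at infinity, Rellich handles the bounded part, and the limiting homogeneous solution vanishes on $\{|x|>r_0\}$ and hence everywhere by Aronszajn--Cordes unique continuation. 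What your approach buys is that, in this compact frequency window $[\eta,M]$, the {\bf GCC} is not actually needed (unique continuation on the support of $a$ suffices, since there is no semiclassical concentration), and the argument is self-contained rather than resting on \cite{Al-Kh}; what it costs is a non-effective constant $C$, plus two hypotheses you should state explicitly: connectedness of $\Omega$ (standard for exterior domains, but essential for the unique continuation step) and membership of the solutions in $H^1_0(\Omega)$ with $\Delta w\in L^2$ so that the Green identities $\int(-\Delta w_n)\bar w_n=\|\nabla w_n\|_{L^2}^2$ are legitimate on the unbounded domain. Your closing Fredholm remark for existence (writing $is(a-1)$ as a relatively compact perturbation of the invertible operator $-\Delta-s^2+is$) is sound and in fact supplies a detail the paper leaves implicit.
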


\begin{proof}
Let $r>r_0+2$ be large and $\chi_1\in
C^\infty_0(\Omega)$, $\chi_1=0$ for $|x|>r+1$ and $\chi_1=1$ for
$|x|<r$. The function $v=\chi_1 w$ satisfies 
\EQ{\label{trunc u}
-\Delta v-s^2v+isa_1v=\chi_1 F-\nabla w\nabla \chi_1-\Delta \chi_1 w.
} 
First, notice that the damper $a_1=\chi_1 a$ satisfies the {\bf GGC}. Then from \cite{Al-Kh} we have
\EQ{\|\nabla v\|_{L^2}^2+s^2\| v\|^2_{L^2}\leq C\|\chi_1 F-\nabla
w\nabla \chi_1-\Delta \chi_1 w\|_{L^2}^2} which in particular implies \EQ{\|\nabla
w\|_{L^2_r}^2+s^2\| w\|^2_{L^2_r}\leq C\|\chi_1 F\|_{L^2}+\|\nabla
w\|_{L^2(r<|x|<r+1)}+\|
w\|_{L^2(r<|x|<r+1)}\label{loc.resol.est1}.}
Next, our aim is to estimate $\|\nabla w\|_{L^2(r<|x|<r+1)}$ and $\|
w\|_{L^2(r<|x|<r+1)}$. Multiplying equation \eqref{eq.reduite} by $\bar{w}$ and integrating we obtain the identity
\EQ{\|\nabla
w\|_{L^2}^2-s^2\|w\|_{L^2}+is\int a|w|^2dx=\langle F, w\rangle
\label{mult.u}}
for which Young's inequality applied to its imaginary part gives 
\EQ{s\int a|w|^2dx\leq
\eta \| w\|_{L^2}^2+C_\eta\|F\|_{L^2}^2 \label{au},}
for some $C_\eta>0$ depending only on $\eta$ and which may change from line to line. Now, let $\varphi\in C^\infty(\Omega)$ be a real valued function such that $\varphi=0$ for $|x|<r-2$ and $\varphi=1$ for $|x|>r-1$. Again, multiplying equation \eqref{eq.reduite} by
 $\varphi \Delta \bar{w}$ and integrating,  we get
 \EQ{
 -\int \varphi |\Delta w|^2dx+s^2\int \varphi|\nabla w|^2dx-is\int \varphi|\nabla w|^2 dx=\langle F,\varphi \Delta w\rangle-
 (s^2-is)\langle w,\nabla \varphi \nabla w\rangle \label{mult.delta.u}.}
Similarly, from the imaginary part we obtain 
\EQ{\int \varphi |\nabla
w|^2dx\leq \eta \|\varphi\Delta
w\|_{L^2}^2+C_\eta\|F\|_{L^2}^2+\eta \| \nabla w\|_{L^2}^2+C_\eta\|
w\|_{L^2(r-2<|x|<r-1)}^2,\label{ext.grad.u1}
}
and (\ref{au}) gives 

\EQ{\int \varphi |\nabla
w|^2dx\leq \eta \int \varphi |\Delta
w|^2dx+\|F\|_{L^2}^2+\eta \| \nabla w\|_{L^2}^2+\eta
\| w\|_{L^2}^2. \label{ext.grad.u2}}
Now, taking the real part of (\ref{mult.delta.u}), we obtain 

\EQ{\int
\varphi |\Delta w|^2dx\lesssim \| \nabla w\|_{L^2}^2+\|F\|_{L^2}^2+\|
w\|_{L^2}^2. \label{ext.delta.u}}
Combining (\ref{ext.grad.u2}) and (\ref{ext.delta.u}) yields

\EQ{\int \varphi |\nabla w|^2dx\lesssim
 \|F\|_{L^2}^2+\eta \| \nabla
w\|_{L^2}^2+\eta \| w\|_{L^2}^2. \label{ext.grad.u3}}
Thus, from (\ref{au}) and (\ref{ext.grad.u3}) we deduce that
\EQ{\int_{|x|>r}  (|\nabla w|^2+|w|^2)dx\leq
 \|F\|_{L^2}^2+\eta (\|\nabla w\|_{L^2_r}^2+\| w\|^2_{L^2_r}). \label{ext.res.est}}
Finally, by (\ref{loc.resol.est1})  and (\ref{ext.res.est}), we get

\EQ{\|\nabla w\|_{L^2_r}^2+\| w\|^2_{L^2_r}\lesssim \|F\|_{L^2}^2}
which implies \EQ{\|\nabla w\|_{L^2}^2+\|
w\|^2_{L^2}\lesssim \|F\|_{L^2}^2,}
as desired.
\end{proof}
\begin{rem}
Note that our resolvent estimate  is global that is without an assumption on
the support of the initial data.
\end{rem}
\subsection{High frequencies }
The following result studies the resolvent in the case of high frequencies.  It is easy to prove the following result
\begin{prop}\label{high.freq}
There exist two constants $\beta,c>0$ such that for all $s\in\R;|s|>\beta$, the solution $w$ of (\ref{eq.reduite}) satisfies 

\EQ{\| w\|_{L^2}\leq
\frac{c}{s}\|F\|_{L^2}.}
\end{prop}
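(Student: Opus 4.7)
The strategy is to split $w$ into an exterior part on $\{|x|>r_0\}$, where $a\equiv 1$ makes the dissipation strong, and a compactly supported cutoff piece, where the \textbf{GCC}-based resolvent estimate from \cite{Al-Kh} applies uniformly at high frequency. The target rate $c/|s|$ is exactly the classical semiclassical resolvent bound under \textbf{GCC}: rewriting \eqref{eq.reduite} with $h=1/|s|$ as $(-h^2\Delta-1+iha)w=h^2F$ makes this transparent.

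First, I would test \eqref{eq.reduite} against $\bar w$: integrating by parts and extracting the imaginary part gives $|s|\int_{\Omega} a|w|^2\,dx\le \|F\|_{L^2}\|w\|_{L^2}$. Since $a\equiv 1$ on $\{|x|>r_0\}$, this immediately produces the exterior bound
\EQ{
|s|\,\|w\|_{L^2(|x|>r_0)}^2\le \|F\|_{L^2}\|w\|_{L^2}.
}
Next, fix $\chi\in C_0^\infty(\Omega)$ equal to $1$ on $\Omega_{r_0+1}$ and supported in $\Omega_{r_0+2}$, and set $v=\chi w$. Then $v$ is compactly supported in $\Omega$ and solves the same reduced equation with right-hand side $G:=\chi F-2\nabla\chi\cdot\nabla w-(\Delta\chi)w$, the commutator terms being supported in the annulus $A=\{r_0+1<|x|<r_0+2\}\subset\{|x|>r_0\}$. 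Because $a$ satisfies \textbf{GCC}, the resolvent estimate of \cite{Al-Kh}, in its uniform high-frequency form, gives
\EQ{
|s|\,\|v\|_{L^2}\le C\bigl(\|F\|_{L^2}+\|\nabla w\|_{L^2(A)}+\|w\|_{L^2(A)}\bigr).
}

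It remains to control $\|\nabla w\|_{L^2(A)}$. A Caccioppoli-type argument --- multiplying \eqref{eq.reduite} by $\psi^2\bar w$ for a second cutoff $\psi\equiv 1$ on $A$ and supported in a slightly larger annulus $A'\subset\{|x|>r_0\}$, integrating by parts, taking the real part, and absorbing the cross term --- yields $\|\nabla w\|_{L^2(A)}^2\lesssim s^2\|w\|_{L^2(A')}^2+\|F\|_{L^2}\|w\|_{L^2}$. Feeding in the exterior bound from the first step upgrades this to $\|\nabla w\|_{L^2(A)}^2\lesssim |s|\,\|F\|_{L^2}\|w\|_{L^2}$. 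Substituting into the bound for $\|v\|_{L^2}$ and using $\|w\|_{L^2}^2\le\|v\|_{L^2}^2+\|w\|_{L^2(|x|>r_0+1)}^2$ gives
\EQ{
\|w\|_{L^2}^2\lesssim |s|^{-2}\|F\|_{L^2}^2+|s|^{-1}\|F\|_{L^2}\|w\|_{L^2},
}
and the desired conclusion $\|w\|_{L^2}\le c|s|^{-1}\|F\|_{L^2}$ follows by completing the square.

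The main obstacle I anticipate is confirming that the \textbf{GCC} resolvent estimate borrowed from \cite{Al-Kh} is genuinely uniform in $|s|$ as $|s|\to\infty$, which is exactly the Lebeau--Burq type semiclassical statement; once this uniformity is in hand, the rest of the argument is bookkeeping and the value $\beta$ is chosen large enough that the absorption step in the last display closes.
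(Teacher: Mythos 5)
Your argument is correct, but it takes a genuinely different route from the paper. The paper proves Proposition~\ref{high.freq} by contradiction: it normalizes $\|w_n\|_{L^2}=1$ with $s_n^{-1}\|F_n\|_{L^2}\to 0$, attaches a microlocal defect measure $\nu$ to $(w_n)$, uses the imaginary-part identity to get $\int a|w_n|^2\,dx\to 0$ (which kills $\nu$ on the support of $a$ and, since $a\equiv 1$ for $|x|>r_0$, also kills the exterior mass), and then invokes the propagation theorem of \cite{burq} together with the \textbf{GCC} to conclude $\nu=0$ everywhere, hence $w_n\to 0$ in $L^2$ --- a contradiction. Your proof is instead quantitative: it transplants the cutoff-plus-commutator scheme that the paper itself uses for \emph{intermediate} frequencies (Proposition~\ref{moy.freq}) to the high-frequency regime, replacing the compactness step by the uniform high-frequency resolvent bound of \cite{Al-Kh} and a Caccioppoli estimate in the transition annulus; the exterior region is handled for free by $|s|\,\|w\|_{L^2(|x|>r_0)}^2\le\|F\|_{L^2}\|w\|_{L^2}$, and the final absorption closes correctly. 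This is not circular, but it does outsource the microlocal content: the $O(1/|s|)$ bound you borrow from \cite{Al-Kh} is itself proved by essentially the same defect-measure contradiction argument the paper runs directly, so the two proofs rest on the same foundation. What your route buys is an explicit, constructive estimate and modularity; what the paper's route buys is self-containedness and a one-stroke treatment of the non-compactly-supported damper. Two points you should make explicit if you write this up: (i) the damper in the truncated equation must be viewed as a compactly supported one ($\chi_2 a$ with $\chi_2\equiv 1$ on the support of $\chi$) satisfying the exterior geometric control condition of \cite{Al-Kh} --- which holds here because $a=1$ on the annulus $r_0<|x|<r_0+2$ --- and (ii) you must identify $\chi w$ with the outgoing resolvent of that truncated problem applied to $G$, which follows from uniqueness of compactly supported solutions (imaginary part plus Rellich's theorem).
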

\begin{proof} We argue by contradiction assuming that there exist $s_n>n$, $w_n\in L^2$
solving of (\ref{eq.reduite}), $F_n\in L^2$ such that 

\EQ{\|
w_n\|_{L^2}=1,\quad\mbox{and}\quad\frac{1}{s_n}\|F_n\|_{L^2}\rightarrow 0.\label{lim}} 
Then we can associate to the sequence $(w_n)$ a micro-local defect measure $\nu$ (we refer for example to \cite{le,Al-Kh} for the definition).
Set
$h_n=\frac{1}{s_n}$, we have \EQ{-h_n^2\Delta
w_n-w_n+ih_naw_n=h_n^2F\label{eq.reduite.semicl}} 
Multiplying equation \eqref{eq.reduite.semicl} by $w_n$, integrating by parts and taking the imaginary part, we obtain from (\ref{lim}) 

\EQ{\int a|w_n|^2dx=h_n \langle
F_n,w_n\rangle\rightarrow 0.\label{awn}} 
That implies that the measure $\nu=0$ in $supp(a)$. In addition, as supp($a$) satisfies the geometric control condition (GCC) then by the propagation property of $\nu$ (see  \cite{burq}),  we conclude that $\nu=0$ everywhere. Consequently, 
 \EQ{w_n\rightarrow 0  \text{ in } L^2_{loc}\label{cvloc}.}
Thanks to (\ref{cvloc}), (\ref{awn}) and taking into  account that $a\equiv 1$, for $|x|\geq r_0$, 
we deduce that $\int
|w_n|^2dx\rightarrow 0$ contradicting the fact that  $\|
w_n\|_{L^2}=1$.
\end{proof}
Combining Propositions \ref{moy.freq} and \ref{high.freq}, we have
the following result
\begin{prop}
For any $\beta>0$, there exist two positive constants $b$ and $C$ such that, for any $\lambda \in
D_{\beta,b}=\{\lambda\in\mathbb{C}; |Re\lambda|\leq b\text{ and
}|Im\lambda|\geq \beta\}$
we have
 \EQ{\|(\lambda
I-A)^{-1}\|\leq C.\label{hautf}}

\end{prop}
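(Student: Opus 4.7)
The proposition should follow by extending the purely-imaginary-axis bounds of Propositions \ref{moy.freq} and \ref{high.freq} to a thin complex strip, and then transferring the estimate from $R_a(\lambda)$ to the semigroup resolvent $\mathcal{R}(\lambda)=(\lambda I-B_a)^{-1}$ via the matrix identity \eqref{relation2}. The first step is a perturbation argument based on a Neumann series; the second is purely algebraic.

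\textbf{Perturbation to the strip.} For $\lambda=\alpha+is$ and $w=R_a(\lambda)F$, I would rewrite the defining equation $(-\Delta+\lambda^2+\lambda a)w=F$ as
\begin{equation*}
(-\Delta-s^2+isa)w \;=\; F-\alpha(\alpha+2is+a)w.
\end{equation*}
Applying $R_a(is)$ to both sides yields $(I+\alpha K_s)w=R_a(is)F$ with $K_s:=R_a(is)(\alpha+2is+a)$. Propositions \ref{moy.freq} and \ref{high.freq} combined give, for $|s|\geq\beta$, the uniform bound $\|R_a(is)\|_{L^2\to L^2}\leq c/\max(1,|s|)$: intermediate frequencies contribute the $O(1)$ piece and high frequencies the $1/|s|$ decay. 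For $\beta\leq|s|\leq M$ the multiplier $|\alpha+2is+a|\leq C(M)$ is bounded, while for $|s|>M$ the linear growth of $|2is|$ is exactly compensated by the $1/|s|$ decay of $R_a(is)$. Hence $\|K_s\|_{L^2\to L^2}\leq M_0$ uniformly in $|s|\geq\beta$, and choosing $b$ with $bM_0\leq 1/2$ makes $I+\alpha K_s$ invertible by Neumann series, giving
\begin{equation*}
\|R_a(\lambda)F\|_{L^2}\leq 2\|R_a(is)F\|_{L^2}\leq C\|F\|_{L^2}\qquad \forall\,\lambda\in D_{\beta,b}.
\end{equation*}

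\textbf{Gradient bound and transfer.} Testing the full equation against $\bar w$ and taking the real part produces $\|\nabla w\|_{L^2}^2=(s^2-\alpha^2)\|w\|_{L^2}^2-\alpha\int a|w|^2+\textrm{Re}\,\langle F,w\rangle$; the factor $s^2$ is absorbed by the $1/|s|^2$ decay of $\|w\|_{L^2}^2$ at high frequency, yielding $\|\nabla w\|_{L^2}\leq C\|F\|_{L^2}$ uniformly on $D_{\beta,b}$. Feeding these two bounds, together with $\|\lambda R_a(\lambda)F\|_{L^2}\leq C\|F\|_{L^2}$ (which follows from the same balance of $|\lambda|\sim|s|$ against the $1/|s|$ decay), into the four entries of \eqref{relation2} yields the desired bound for $\mathcal{R}(\lambda)$.

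\textbf{Main obstacle.} The delicate point is uniformity in $s$ of the Neumann-series step. The multiplier $\alpha+2is+a$ grows linearly in $|s|$, so a naive perturbation estimate fails. What rescues the argument is the precise $1/|s|$ decay of $\|R_a(is)\|_{L^2\to L^2}$ from Proposition \ref{high.freq}, which exactly matches this growth and allows a single $b>0$ to work uniformly for all $|s|\geq\beta$.
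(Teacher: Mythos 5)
Your argument is correct and reaches the stated bound, but it does considerably more work than the paper at the key step. The paper first reads Propositions \ref{moy.freq} and \ref{high.freq} as giving a uniform bound $\|(is-A)^{-1}\|_{\mathcal L(\mathcal H)}\leq M$ for $|s|\geq\beta$, and then extends off the imaginary axis by the resolvent identity for the \emph{first-order matrix generator}: from $(is-A)=(\mu+is-A)-\mu$ one gets $(\mu+is-A)^{-1}=(is-A)^{-1}-\mu(\mu+is-A)^{-1}(is-A)^{-1}$, hence $\|(\mu+is-A)^{-1}\|\leq M+|\mu|M\|(\mu+is-A)^{-1}\|$ and the conclusion for $|\mu|\leq 1/(2M)$ (the paper's final line reads $M/2$ where it should read $2M$, a harmless slip). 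Because the perturbation there is the scalar $\mu$, the competition between the size of the perturbation and $|s|$ never arises. You instead perturb the second-order operator $-\Delta+\lambda^2+\lambda a$, where the perturbation $\alpha(\alpha+2is+a)$ grows linearly in $|s|$; you correctly identify this as the delicate point and correctly resolve it using the $1/|s|$ decay of $\|R_a(is)\|_{L^2\to L^2}$ from Proposition \ref{high.freq}, but this entire difficulty is an artifact of the choice to work at the level of $R_a$ rather than $A$. What your route buys in exchange is that the transfer from the scalar estimates to the $\mathcal L(\mathcal H)$ bound via \eqref{relation2} is made explicit rather than left tacit as in the paper; note, however, that this transfer is the least detailed part of both arguments, and in yours it is compressed into the last two sentences: the entries $R_a(\lambda)(\lambda+a)$ and $R_a(\lambda)\lambda(\lambda+a)-1=R_a(\lambda)\Delta$ act on the $H^D$ component, so one must control, e.g., $\|\nabla R_a(\lambda)(\lambda+a)f_1\|_{L^2}$ by $\|\nabla f_1\|_{L^2}$ (and not by $\|f_1\|_{L^2}$, which is unavailable in $H^D$ on an exterior domain); a line justifying this would make your write-up complete.
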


\begin{proof}
Set $\lambda=\mu+is $. In Propositions \ref{moy.freq} and \ref{high.freq}, we have shown that for all $\beta>0$ there exists $M>0$ such that, for every $|s|>\beta$, we have

\EQ{\|(is-A)^{-1}\|\leq M.} 
Moreover, the following trivial identity  holds 
\EQ{(is-A)=(\mu+is-A)-\mu.}
Multiplying both sides by
$(\mu+is-A)^{-1}(is-A)^{-1}$, we obtain
\EQ{(\mu+is-A)^{-1}=(is-A)^{-1}-\mu (\mu+is-A)^{-1}(is-A)^{-1},}
giving,\EQ{\|(\mu+is-A)^{-1}\|\leq M+\mu M\|(\mu+is-A)^{-1}\|.} 
Now the assumption $|\mu|\leq \frac{1}{2M}$ implies
\EQ{\|(\mu+is-A)^{-1}\|\leq M/2}
as desired. \end{proof}

\section{Local energy decay}
To prove the local energy decay (\ref{loc.energy}), we need the following Lemma that can be found for example in \cite{Shib,Shib-Dan}.
\begin{lem}\label{lemSh}
Let $E$ be a Banach space with norm $|.|_E$. Let $k\geq 0$ be an
integer and $\sigma$ be a positive number $\leq 1$. Assume that
$f\in \mathcal{C}^{k+\sigma}(\mathbb{R},E)$, and put
\EQ{F(t)=\frac{1}{2\pi}\int_{-\infty}^\infty f(\tau)exp(i\tau
t)d\tau.} Then \EQ{|F(t)|_E\leq
C(1+|t|)^{-(k+\sigma)}\| f\|_{k+\sigma,E}}

\end{lem}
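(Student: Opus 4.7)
\textbf{Proof proposal for Lemma \ref{lemSh}.} The plan is to split the argument into two regimes depending on the size of $t$, and to carry out the key estimate by combining integration by parts with a translation-averaging trick that converts the $e^{i\tau t}$ oscillation into finite differences of $f^{(k)}$, which are exactly the quantities controlled by the Besov-type norm $\|\cdot\|_{k+\sigma,E}$.

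First, for $|t|\le 1$, there is nothing to prove: the trivial bound
\[
|F(t)|_E\le\frac{1}{2\pi}\int_{\mathbb R}|f(\tau)|_E\,d\tau\le C\|f\|_{k+\sigma,E}
\]
suffices, since the right-hand side is one of the terms appearing in $\|f\|_{k+\sigma,E}$, and $(1+|t|)^{-(k+\sigma)}$ is bounded away from $0$ on $\{|t|\le 1\}$.

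Second, for $|t|>1$, I would first integrate by parts $k$ times. Since $f\in\mathcal C^{k+\sigma}(\mathbb R;E)$ has derivatives up to order $k$ in $L^1$ (this is built into $\|\cdot\|_{k+\sigma,E}$), the boundary terms vanish and
\[
F(t)=\frac{1}{(-it)^k}\cdot\frac{1}{2\pi}\int_{\mathbb R}f^{(k)}(\tau)e^{i\tau t}\,d\tau.
\]
Setting $h=\pi/t$, the identity $e^{iht}=-1$ is the engine of the translation trick. In the case $0<\sigma<1$, performing the change of variables $\tau\mapsto\tau+h$ gives a second representation
\[
F(t)=-\frac{1}{(-it)^k}\cdot\frac{1}{2\pi}\int_{\mathbb R}f^{(k)}(\tau+h)e^{i\tau t}\,d\tau,
\]
and averaging the two expressions yields
\[
2F(t)=\frac{1}{(-it)^k}\cdot\frac{1}{2\pi}\int_{\mathbb R}\bigl[f^{(k)}(\tau)-f^{(k)}(\tau+h)\bigr]e^{i\tau t}\,d\tau.
\]
Taking norms in $E$ and invoking the definition of $\|f\|_{k+\sigma,E}$ with shift parameter $h$ (first-difference definition for $0<\sigma<1$) produces
\[
|F(t)|_E\le\frac{1}{4\pi|t|^k}|h|^\sigma\|f\|_{k+\sigma,E}\le\frac{C}{|t|^{k+\sigma}}\|f\|_{k+\sigma,E},
\]
which gives the desired decay since $(1+|t|)\sim|t|$ on $\{|t|>1\}$.

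For the borderline case $\sigma=1$, the same integration-by-parts reduction works, but a single shift only yields a first difference, which the Besov-type norm does not control at the Lipschitz exponent. Here I would iterate the shift: writing three copies of the integrated-by-parts formula, one unshifted, one shifted by $h$, and one shifted by $2h$ (using $e^{iht}=-1$ and $e^{2iht}=1$), and combining them with coefficients $1,2,1$, I obtain
\[
4F(t)=\frac{1}{(-it)^k}\cdot\frac{1}{2\pi}\int_{\mathbb R}\bigl[f^{(k)}(\tau+2h)-2f^{(k)}(\tau+h)+f^{(k)}(\tau)\bigr]e^{i\tau t}\,d\tau,
\]
which is precisely the second-difference quantity controlled by $\|f\|_{k+1,E}$. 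Bounding in $E$ and using $|h|=\pi/|t|$ then gives $|F(t)|_E\le C|t|^{-(k+1)}\|f\|_{k+1,E}$, finishing this case. The main delicacy in the argument is matching the Besov norm exactly, i.e.\ remembering to use second differences when $\sigma=1$ and verifying that the boundary terms in the $k$-fold integration by parts really vanish — both are handled cleanly by the definition of $\|\cdot\|_{k+\sigma,E}$, so no deep obstacle remains.
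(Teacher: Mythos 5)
Your proof is correct, and it is essentially the standard argument for this lemma: the paper itself gives no proof (it defers to \cite{Shib,Shib-Dan}), and the proof in those references is exactly your combination of $k$-fold integration by parts with the shift $\tau\mapsto\tau+\pi/t$, using first differences of $f^{(k)}$ when $0<\sigma<1$ and second differences when $\sigma=1$ to match the Besov-type norm. The only point worth making explicit is the vanishing of the boundary terms, which you correctly justify from the integrability of $f^{(j)}$ and $f^{(j+1)}$ for $j\le k-1$.
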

\begin{proof}[Proof of Theorem 1.]
Let $\eta (t)\in C^\infty_0 (\mathbb{R})$ such that $\eta (t)=0$ for
$t\leq 1$ and $\eta (t)=1$ for $t\geq 2$. For any $f\in \mathcal H$, set $V(t)=\eta (t)
U(t)f$. Then we have, 
\EQ{(\partial _t-A)V=\eta '(t)U(t)f.\label{stat}
} 
First, the Fourier transform of $V$ satisfies \EQ{(\lambda-A)\widehat{V}=\widehat{\eta
'Uf}=x(\lambda)
\label{stat1}}
Second, from the finite speed of propagation, the operator 
\EQ{\widehat{\eta
'U}(\lambda):\mathcal{H}_{com}\rightarrow \mathcal{H}_{com} }
is analytic. Next,  (\ref{stat1}) yields
\EQ{\widehat{V}(\lambda)=(\lambda-A)^{-1}x(\lambda),\qquad \mathcal Re(\lambda)>0} and by the inverse formula of the Laplace transform, we have for $\alpha\in]0,\delta[$
\EQ{V(t)=\frac{1}{2i\pi}\int_{-\infty}^\infty
e^{(\alpha+is)t}(\alpha+is-A)^{-1}x(\alpha+is)ds,}
where $\delta$ given in lemma \ref{low.freq}. Next, let $\rho (s)\in
C^\infty_0(\mathbb{R})$, $\rho (s)=1$ if $|s|\leq \delta/2$ and equal
to 0 for $|s|>\delta$. We have for all $t>0$ \EQ{\begin{array}{lll}
V(t)&=&\displaystyle \frac{1}{2i\pi} e^{\alpha t}\left\{\int_{-\infty}^\infty
e^{ist}\rho (s)(\alpha+is-A)^{-1}x(\alpha+is)ds\right. \\&
&\displaystyle+\int_{-\infty}^\infty e^{ist}(1-\rho
(s))(\alpha+is-A)^{-1}x(\alpha+is)ds\Big\}\\&:=&J_1(t)+J_2(t).\end{array}}
Let us consider the term $J_1(t)$. Thanks to Lemma \ref{low.freq} and Lemma \ref{lemSh}, we deduce that for all $y\in \mathcal H$ the following holds
\begin{equation}
\begin{array}{lll}
|\langle \chi J_1(t),y\rangle|&\leq & e^{\alpha
t}C(1+t)^{-n/2}\| \rho
(.)(\chi R(\alpha+i.)x(\alpha+i.),y)\|_{N/2,{\R}}\\
&\leq & e^{\alpha t}CM_1(1+t)^{-n/2}\|f\|_{\mathcal H}\|y\|_{\mathcal H}.
\end{array}
\end{equation}
Now letting $\alpha\rightarrow 0$, we obtain
\EQ{\|\chi J_1(t)\|_{\mathcal H}\leq CM_1(1+t)^{-N/2}\|f\|_{\mathcal H}.\label{J1}}

Finally, we study the term $J_2(t)$. Using the identity
$e^{ist}=(it)^{-1}\frac{d}{ds}e^{ist}$,  an integrating by parts gives

\EQ{\begin{array}{lll} J_2(t)&=& \frac{1}{2i\pi} e^{\alpha t}
\Big\{\frac{1}{it}[e^{ist}(1-\rho
(s))(\alpha+is-A)^{-1}x(\alpha+is)\Big]_{s=-\infty}^{\infty}\\
&& +\displaystyle \frac{-1}{(it)^2}\Big[e^{ist}\frac{d}{ds}{(1-\rho
(s))(\alpha+is-A)^{-1}x(\alpha+is)}\Big]_{s=-\infty}^{\infty}\\
&&+\ldots
+\displaystyle\frac{(-1)^{l-1}}{(it)^l}\Big[e^{ist}\frac{d^{l-1}}{ds^{l-1}}{(1-\rho
(s))(\alpha+is-A)^{-1}x(\alpha+is)}\Big]_{s=-\infty}^{\infty}\\
&&\displaystyle+\frac{(-1)^{l}}{(it)^l}\int_{-\infty}^{\infty}e^{ist}\frac{d^{l}}{ds^{l}}\Big\{(1-\rho
(s))(\alpha+is-A)^{-1}x(\alpha+is)\Big\}ds\Big\}.\end{array}}
The boundedness  of the resolvent implies

\EQ{\|\frac{d^{j}}{ds^{j}}(\alpha+is-A)^{-1}\|\leq
j!M_a^j\|(\alpha+is-A)^{-1}\|.\label{est1}} 
Moreover, from (\ref{hautf}) and tha fact that $x(\lambda)$ is smooth and rapidly decaying function, we first deduce that
\EQ{\Big[e^{ist}\frac{d^{j}}{ds^{j}}{(1-\rho
(s))(\alpha+is-A)^{-1}x(\alpha+is)}\Big]_{s=-\infty}^{\infty}= 0
}
yielding 
\EQ{J_2(t)=\frac{(-1)^{l}e^{\alpha t}}{2i\pi
(it)^l}\int_{-\infty}^{\infty}e^{ist}\frac{d^{l}}{ds^{l}}\Big\{(1-\rho
(s))(\alpha+is-A)^{-1}x(\alpha+is)\Big\}ds,}
and therefore
\EQ{\|J_2(t)\|_{\mathcal H}\leq \frac{C e^{\alpha t}}{
t^l}\sum_{j,k}\int_{|s|\geq
d/2}\Big\|\frac{d^{j}}{ds^{j}}((\alpha+is-A)^{-1})\frac{d^{k}}{ds^{k}}x(\alpha+is)\Big\|_{\textcolor{red}{\mathcal H}}ds.
}
From (\ref{est1})  and (\ref{hautf})  we obtain \EQ{\|J_2(t)\|_{\mathcal H}\leq \frac{C e^{\alpha t}}{
t^l}\sum_{k}\int_{-\infty}^{\infty}\Big\|\frac{d^{k}}{ds^{k}}x(\alpha+is)\Big\|_{\mathcal H}ds.}
Since $x(\lambda)=\widehat{\eta ' u}$, then $\displaystyle\frac{d^{k}}{ds^{k}}x(\alpha+is)=\widehat{t^k\eta ' u}$, and by Plancherel theorem, we conclude that \EQ{\|J_2(t)\|_{\mathcal H}\leq \frac{C
e^{\alpha t}}{ t^l}\sum_{k}\int_{-\infty}^{\infty}e^{\alpha
z}\|z^k\eta '(z) U(z)f\|_{\mathcal H}dz\leq \frac{C e^{\alpha t}}{ t^l}\|f\|_{\mathcal H}.}
Letting $\alpha\rightarrow 0$ we get, for all $l\geq 1$
 \EQ{\|J_2(t)\|_{\mathcal H}\leq
 \frac{C }{ t^l}\|f\|_{\mathcal H}\label{J2}.}
Combining (\ref{J1}) and (\ref{J2}) we obtain \EQ{\|\chi V(t)\|_{\mathcal H}\leq
 C(1+t)^{-N/2}\|f\|_{\mathcal H}}
 as desired.
 \end{proof}
\section{Total energy decay}
From the previous section, we concluded that for any solution $u$ of \eqref{DLW} with initial data $(u_0,u_1)\in\mathcal{H}_{r_1}=H^D_{r_1}\times L^2_{r_1}$ we have \EQ{E_{r_1}(u,t)\leq c
(1+t)^{-N}.\label{loc.ener.decay2}} 
Now our aim is to prove that the total energy and the $L^2$-norm of
the solutions decay in time as in \eqref{tot.energy} and \eqref{tot.L^2}.

First, let us give some elementary known Lemmas that we need.
\begin{lem}\label{lem1}
Let $a,b>0$ such that $max (a,b)>1$. There exists a constant $C$
depending only on $a$ and $b$ such that the following inequality
holds: \EQ{\int_0^t(1+t-s)^{-a}(1+s)^{-b}\leq C (1+t)^{-min(a,b)}.}
In particular, for any $\alpha>0$ we have \EQ{\int_0^te^{-\alpha(t-s)}(1+s)^{-b}\leq C
(1+t)^{-b}.}
\end{lem}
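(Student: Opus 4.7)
The plan is to split the integration interval at the midpoint $t/2$ and treat each half by pulling out the slowly varying factor as a uniform bound. Without loss of generality assume $a>1$; the symmetric case $b>1$ follows from the change of variables $s\mapsto t-s$, which swaps the roles of $a$ and $b$.

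On the interval $[0,t/2]$, one has $t-s\geq t/2$, so $(1+t-s)^{-a}\leq C(1+t)^{-a}$ and the contribution is bounded by
\begin{equation*}
C(1+t)^{-a}\int_0^{t/2}(1+s)^{-b}\,ds.
\end{equation*}
The remaining integral is uniformly bounded if $b>1$, of order $\log(1+t)$ if $b=1$, and of order $(1+t)^{1-b}$ if $b<1$. In each case, since $a>1$, the resulting bound is dominated by $C(1+t)^{-\min(a,b)}$; the logarithmic factor in the boundary case is absorbed by any strict gap $a>1$, and in the case $b<1$ one has $-a+1-b\leq -b$ because $a\geq 1$.

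On $[t/2,t]$, one instead uses $1+s\geq 1+t/2$ to bound $(1+s)^{-b}\leq C(1+t)^{-b}$, and the substitution $u=t-s$ turns the remaining integral into $\int_0^{t/2}(1+u)^{-a}\,du$, which is uniformly bounded since $a>1$. This gives a contribution of at most $C(1+t)^{-b}\leq C(1+t)^{-\min(a,b)}$. Adding the two halves yields the first estimate.

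For the exponential variant, the same split works even more easily: on $[0,t/2]$ the factor $e^{-\alpha(t-s)}\leq e^{-\alpha t/2}$ decays faster than any polynomial, so its contribution is trivially $O((1+t)^{-b})$; on $[t/2,t]$ we again bound $(1+s)^{-b}\leq C(1+t)^{-b}$ and note that $\int_{t/2}^t e^{-\alpha(t-s)}\,ds\leq 1/\alpha$. The main bookkeeping obstacle, such as it is, lies in the borderline cases of the first inequality where one of the exponents equals $1$ or is less than $1$, but the hypothesis $\max(a,b)>1$ ensures that at least one of the two integrals produced by the splitting always converges to a constant, which is what drives the argument.
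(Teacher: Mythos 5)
Your proof is correct and complete. The paper itself does not prove Lemma \ref{lem1} at all --- it only refers the reader to \cite{Segal} --- so there is no in-paper argument to compare against; your splitting of $[0,t]$ at $s=t/2$, bounding the slowly varying factor by $C(1+t)^{-a}$ or $C(1+t)^{-b}$ on the appropriate half, is the standard way to establish such convolution estimates, and your case analysis for $b>1$, $b=1$, $b<1$ on the first half (using $a>1$ to absorb the growth, with the symmetry $s\mapsto t-s$ justifying the reduction to $a>1$) is exactly the bookkeeping needed. The exponential variant is likewise handled correctly, since $e^{-\alpha t/2}$ dominates any polynomial growth of $\int_0^{t/2}(1+s)^{-b}\,ds$.
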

For example, we refer to \cite{Segal} for the proof of Lemma \ref{lem1}.

\begin{lem}(Proposition 3.2 \cite{Kaw-Nak-Ono})\label{lem2}
Let $(u_0,u_1)\in (H^1\cap L^q)\times (L^2\cap L^q$), $1\leq q \leq
2$, and let $f\in L^\infty_{loc}([0,\infty);L^p\cap L^2)$, $1\leq
p \leq 2$. Then, the solution $u(t)$ of 
\EQ{
 u_{tt} + u_t -\De u  =  f(t,x)\label{free.Eq}}
with initial data $(u_0,u_1)$, satisfies
\begin{equation}
\begin{array}{lll}\|u(t)\|_{L^2} &\leq &
c(1+t)^{-N(1/q-1/2)/2}(\|u_0\|_q+\|u_1\|_q)+ce^{-\beta
t}(\|u_0\|_2+\|u_1\|_2)\\ &+ &\displaystyle
c\int_0^t(1+t-s)^{-N(1/p-1/2)/2}\|f(s)\|_pds+c\int_0^te^{-\beta(t-s)}\|f(s)\|_2ds
\end{array}
\end{equation}
with some $0<\beta<1/2$.
\end{lem}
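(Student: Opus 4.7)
\textbf{Plan for the proof of Lemma \ref{lem2}.}
The plan is to establish the estimate on the free space $\mathbb{R}^N$ via Fourier analysis, following the approach of Matsumura. I would first handle the homogeneous case $f\equiv 0$ and then invoke Duhamel's principle for the inhomogeneous term. The key mechanism to exploit is that the low- and high-frequency components of the solution enjoy qualitatively different decay, which matches the two distinct rates appearing in the conclusion.

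First I would take the spatial Fourier transform of $u_{tt}+u_t-\Delta u=0$ to obtain, for each frequency $\xi$, the scalar ODE $\hat u_{tt}+\hat u_t+|\xi|^2\hat u=0$, whose characteristic roots are $\lambda_\pm(\xi)=\tfrac12\bigl(-1\pm\sqrt{1-4|\xi|^2}\bigr)$. For $|\xi|\le 1/2$ both roots are real and $\lambda_+(\xi)=-|\xi|^2+O(|\xi|^4)$ near the origin, which is the parabolic regime, while for $|\xi|>1/2$ the roots are complex conjugates with real part $-1/2$, giving pure exponential decay. I would then pick a smooth cutoff $\phi\in C_0^\infty(\mathbb{R}^N)$ equal to $1$ on $\{|\xi|\le 1/4\}$ and supported in $\{|\xi|\le 1/2\}$, and split $u=u_L+u_H$ via the Fourier multipliers $\phi(D)$ and $1-\phi(D)$.

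On the low-frequency piece, the multipliers acting on $\hat u_0$ and $\hat u_1$ are dominated pointwise by $e^{-ct|\xi|^2}$ times smooth bounded symbols, so Young's convolution inequality combined with the classical $L^q\to L^2$ decay of the heat kernel yields $\|u_L(t)\|_{L^2}\le c(1+t)^{-N(1/q-1/2)/2}(\|u_0\|_{L^q}+\|u_1\|_{L^q})$. On the high-frequency piece the multipliers are uniformly bounded by $e^{-\beta t}$ for any fixed $0<\beta<1/2$, so Plancherel's identity immediately gives $\|u_H(t)\|_{L^2}\le ce^{-\beta t}(\|u_0\|_{L^2}+\|u_1\|_{L^2})$.

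Finally I would express the full solution through Duhamel's formula $u(t)=S(t)(u_0,u_1)+\int_0^t S(t-s)(0,f(s))\,ds$, where $S(t)$ denotes the free damped-wave semigroup on $\mathbb{R}^N$, and apply the two bounds just obtained to both the initial-data term and the integrand for each $s$. Integrating the resulting pointwise inequality in $s$ produces the two convolution terms in the statement. The main technical obstacle is the low-frequency analysis: one must carefully track the Taylor expansion of $\lambda_+(\xi)$ near $\xi=0$ and verify that, modulo smooth bounded factors, the relevant Fourier multipliers collapse to those of the heat semigroup, so that the sharp $L^q\to L^2$ decay rate $(1+t)^{-N(1/q-1/2)/2}$ transfers cleanly to our damped-wave setting.
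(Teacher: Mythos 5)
The paper does not prove this lemma at all: it is quoted verbatim as Proposition 3.2 of the cited reference \cite{Kaw-Nak-Ono}, so there is no in-paper argument to compare against. Your sketch is the standard Matsumura-type proof that underlies that proposition — Fourier transform, characteristic roots $\lambda_\pm(\xi)=\tfrac12(-1\pm\sqrt{1-4|\xi|^2})$, low/high frequency splitting, and Duhamel — and it is correct in outline. Two points deserve explicit care if you were to write it out: (i) the low-frequency $L^q\to L^2$ bound comes from Hausdorff--Young plus H\"older applied to $e^{-ct|\xi|^2}$ (equivalently your heat-kernel comparison), which is where the exponent $N(1/q-1/2)/2$ arises; and (ii) near the double root $|\xi|=1/2$ the multiplier $\bigl(e^{\lambda_+t}-e^{\lambda_-t}\bigr)/(\lambda_+-\lambda_-)$ degenerates to $te^{-t/2}$, which is exactly why the conclusion only asserts $e^{-\beta t}$ for \emph{some} $0<\beta<1/2$ rather than $\beta=1/2$; your claim that the high-frequency multipliers are ``uniformly bounded by $e^{-\beta t}$'' is true but needs this small observation, together with the fact that on the annulus $1/4\le|\xi|\le 1/2$ (where your cutoff is not identically $0$ or $1$) one still has $\lambda_+\le -c<0$.
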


\begin{lem}(Proposition 4.2 \cite{Kaw-Nak-Ono})\label{lem3}
Let $u$ be the solution of (\ref{free.Eq}). Assume that \EQ{\|u(t)\|_{L^2}^2\leq k_0(1+t)^{-a}} and
\EQ{\|f(t)\|_{\textcolor{red}{L^2}}^2\leq
k_1\{(1+t)^{-b}+(1+t)^{-c}E(t)^\mu+(1+t)^{-d}E(t)\}} with some
$k_0,k_1,a\geq 0$, $b,c,d>0$ and $0<\mu<1$. Then, $E(t)$ has the
decay property \EQ{E(t)\leq c_1(1+t)^{-\theta}} where $c_1$ is a
constant depending on $E(0)$ and other known constants, and
$\theta>0$ is given by
\EQ{\theta=min\{1+a,b,\frac{a+b}{2},\frac{c}{1-\mu},\frac{a+c}{2-\mu},a+d\}.}
\end{lem}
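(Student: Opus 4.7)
The plan is to combine two standard multiplier identities for the damped wave equation with a bootstrap argument based on the hypotheses on $\|u(t)\|_{L^2}$ and on the structural bound on $\|f(t)\|_{L^2}^2$.

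First, I would set up the fundamental identities. Multiplying equation \eqref{free.Eq} by $u_t$ and integrating over $\Omega$ yields
\[
E'(t) + \|u_t\|_{L^2}^2 = \langle f(t), u_t(t)\rangle,
\]
while multiplying by $u$ gives the standard multiplier identity
\[
\frac{d}{dt}\Bigl(\langle u, u_t\rangle + \tfrac12\|u\|_{L^2}^2\Bigr) + \|\nabla u\|_{L^2}^2 - \|u_t\|_{L^2}^2 = \langle f, u\rangle.
\]
Forming the combination $\tilde E(t) := E(t) + \varepsilon\bigl(\langle u, u_t\rangle + \tfrac12\|u\|_{L^2}^2\bigr)$ for a small $\varepsilon>0$, summing the two identities with weights $1$ and $\varepsilon$, and applying Young's inequality to the source terms, I obtain, after absorbing the small $\|u_t\|^2$ contribution on the left and using $\tilde E\sim E+\|u\|_{L^2}^2$, the differential inequality
\[
\tilde E'(t) + c_0 \tilde E(t) \leq C\bigl(\|f(t)\|_{L^2}^2 + \|f(t)\|_{L^2}\|u(t)\|_{L^2} + \|u(t)\|_{L^2}^2\bigr).
\]
Gronwall then produces
\[
\tilde E(t) \leq C e^{-c_0 t}\tilde E(0) + C\int_0^t e^{-c_0(t-s)}\bigl(\|f(s)\|_{L^2}^2 + \|f(s)\|_{L^2}\|u(s)\|_{L^2} + \|u(s)\|_{L^2}^2\bigr)\,ds.
\]

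Next I would close a bootstrap. Assume inductively that $E(s)\leq M(1+s)^{-\theta}$ on $[0,t]$ for a $\theta$ and large $M$ to be chosen. Substituting this together with $\|u(s)\|_{L^2}^2 \leq k_0(1+s)^{-a}$ into the structural bound on $\|f(s)\|_{L^2}^2$, each of the three pieces produces two rates after convolution against $e^{-c_0(t-s)}$ via the second part of Lemma~\ref{lem1}: one directly from $\|f\|_{L^2}^2$, and one through the cross term $\|f\|_{L^2}\|u\|_{L^2}$ (using $\sqrt{x+y+z}\leq \sqrt x+\sqrt y+\sqrt z$). The direct contributions yield the rates $(1+t)^{-b}$, $(1+t)^{-(c+\mu\theta)}$ and $(1+t)^{-(d+\theta)}$, while the cross term yields $(1+t)^{-(a+b)/2}$, $(1+t)^{-(a+c+\mu\theta)/2}$ and $(1+t)^{-(a+d+\theta)/2}$. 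Imposing that each be dominated by $M(1+t)^{-\theta}$ produces the constraints $\theta\leq b$, $\theta\leq \tfrac{c}{1-\mu}$, $\theta\leq \tfrac{a+b}{2}$, $\theta\leq \tfrac{a+c}{2-\mu}$ and $\theta\leq a+d$; the $(1+t)^{-d-\theta}$ contribution is automatic since $d>0$, and the $\|u\|^2$ term gives the preliminary bound $\theta\leq a$.

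To upgrade $\theta\leq a$ to the sharper $\theta\leq 1+a$, I would refine the analysis with the time-weighted multiplier $(1+t)^{\alpha} u_t + \beta(1+t)^{\alpha-1} u$ (the Ikehata--Nakao trick). Choosing $\alpha=1+a$ and combining with the already established control on $\|u\|_{L^2}^2$ lets one derive $\int_0^t (1+s)^a E(s)\,ds\leq C$; coupling this with the approximate monotonicity of $\tilde E$ (modulo the controlled forcing) gives $E(t)\leq C(1+t)^{-(1+a)}$. Taking $M$ large relative to $E(0)$, $k_0$, $k_1$ then closes the bootstrap for any $\theta$ less than the minimum of the six exponents.

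The principal obstacle is precisely this $1+a$ term: the naive exponential Gronwall only delivers $\theta\leq a$, so one must either invoke the weighted multiplier just described or the equivalent argument of integrating the multiplier identity to show that the time average $\int_0^t E\,ds$ inherits the full $(1+t)^{-a}$ decay of $\|u\|_{L^2}^2$. Once this is in place, the remaining work is pure bookkeeping: six distinct convolutions to estimate through Lemma~\ref{lem1}, and a careful choice of $M$ to propagate the bootstrap.
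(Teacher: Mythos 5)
The paper does not prove this lemma: it is imported verbatim as Proposition~4.2 of Kawashima--Nakao--Ono \cite{Kaw-Nak-Ono}, so there is no in-paper argument to compare against. Your strategy is the standard one from that reference (energy identity plus the multiplier $u$, i.e.\ a modified/weighted energy), and the first half of your argument is sound: the perturbed energy $\tilde E=E+\varepsilon(\langle u,u_t\rangle+\tfrac12\|u\|^2)$, the exponential Gronwall bound, the convolution estimates via Lemma~\ref{lem1}, and the bootstrap do deliver the exponents $b$, $\tfrac{a+b}{2}$, $\tfrac{c}{1-\mu}$, $\tfrac{a+c}{2-\mu}$, $a+d$ --- together with the weaker $a$ in place of $1+a$. (One small caveat there: the term $(1+t)^{-d}E$ is \emph{linear} in $E$, so the bootstrap cannot be closed merely by taking $M$ large; you need the smallness of $k_1(1+t)^{-d}$ for $t\ge T_0$ and a crude bound on $[0,T_0]$.)

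The genuine gap is that the entire content of the lemma --- the exponent $1+a$ rather than $a$ --- is compressed into the single sentence invoking the weighted multiplier, and the asserted intermediate bound $\int_0^t(1+s)^aE(s)\,ds\le C$ is not free. To obtain it you must run the identity for $\langle u,u_t\rangle+\tfrac12\|u\|^2$ with weight $(1+s)^a$ and the energy identity with weight $(1+s)^{1+a}$, absorb $(1+a)\int(1+s)^aE$ into $\int(1+s)^{1+a}\|u_t\|^2$ only for $s$ large, control the boundary term $(1+t)^a\|u\|\,\|u_t\|$ using the already established decay $E\lesssim(1+t)^{-a}$, and verify that $\int(1+s)^{1+a}\|f(s)\|_{L^2}^2\,ds$ converges against each of the three branches of the hypothesis; the derivative terms $a\int(1+s)^{a-1}\|u\|^2\,ds$ and $a\int(1+s)^{a-1}\|u\|\,\|u_t\|\,ds$ are borderline and produce logarithms, so reaching the exact exponent $\theta$ (not $\theta-\epsilon$) requires the case analysis of \cite{Kaw-Nak-Ono} according to which of the six quantities realizes the minimum. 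In short: right road map, correct identification of where the difficulty sits, but the decisive step is asserted rather than proved.
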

Let us now prove Theorem \ref{main thm 2}.
Let $u$ be a solution of (\ref{DLW}) with initial data $(u_0,u_1)\in \mathcal{H}_{r_1}=H^D_{r_1}\times L^2_{r_1}$ and consider a smooth function $\phi$ satisfying
$\phi (x)=1$ for $|x|\geq r_1$ and $\phi (x)=0$ for $x\in
\Omega_{r_0}.$ Setting $w=\phi (x)u$,  we have \EQ{
 w_{tt} + w_t -\De w  = -\nabla \phi .\nabla u-\De \phi u\equiv f(t,x).\label{Exterior.Eq}}
It is easy to see that $\|f(t)\|_{L^2}^2\leq c E_{r_1}(u,t)$. But thanks to Theorem \ref{main thm 1}, we have
\EQ{E_{r_1}(u,t)\leq c (1+t)^{-N}\label{loc.ener.decay1}.} So combining Lemma \ref{lem1} (with
$a=N/4$ and $b=N/2$) and Lemma \ref{lem2} (with
$p=q=1$) we obtain \EQ{\|w(t)\|_{L^2}^2\leq
c(1+t)^{-N/2}.\label{ext.L^2.decay}} Now, applying Lemma \ref{lem3} with  $a=N/2$ and $b=N$, we get \EQ{E(w,t)\leq
c_1(1+t)^{-\theta},\label{ext.ener.decay}} with
\EQ{\theta=min\{1+\frac{N}{2},\frac{3N}{4}\}.} So we can conclude
from  (\ref{loc.ener.decay1}), (\ref{ext.L^2.decay}) and
(\ref{ext.ener.decay}) that \EQ{\|u(t)\|_{L^2}^2\leq
c(1+t)^{-N/2}\label{glob.L^2.decay}} and \EQ{E(u,t)\leq
c_1(1+t)^{-\theta},\label{glob.ener.decay}}
as desired.
\begin{acknowledgement}
This paper has been supported by Deanship of Scientific Research of the Northern Border University under the reference no : 434-034.\\
The authors would like to thank the anonymous referee for his helpful comments on the presentation of the paper.
\end{acknowledgement}

\end{document}